\titleformat{\chapter}[display]{\bfseries\huge}{\filright\huge\chaptertitlename~\thechapter}{3ex}{\titlerule\vspace{1ex}\filright}[\vspace{1ex}\titlerule]
\newtheorem{thm}{Theorem}[section]
\newtheorem{prop}[thm]{Proposition}
\newtheorem{lem}[thm]{Lemma}
\newtheorem{cor}[thm]{Corollary}
\newtheorem{conjecture}[thm]{Conjecture}
\theoremstyle{definition}
\theoremstyle{definition}
\theoremstyle{remark}
\newtheorem{remark}[thm]{Remark}
\newenvironment{feqn*}{\begin{mdframed}\begin{equation*}}{\vspace{1mm}
\end{equation*}\end{mdframed}}
\numberwithin{equation}{section}
\newcommand{\N}{\mathbb{N}}
\newcommand{\CA}{\mathcal{A}}
\newcommand{\CB}{\mathcal{B}}
\newcommand{\CD}{\mathcal{D}}
\DeclareMathOperator{\supp}{supp}
\newcommand{\bs}\boldsymbol{}
\renewcommand{\geq}{\geqslant}
\renewcommand{\leq}{\leqslant}
\renewcommand{\mod}[1]{\,({\rm mod}\,#1)}
\newcommand{\vect}[1]{\overrightarrow{\boldsymbol{#1}}}
\definecolor{blue}{rgb}{.2,.6,.75}
\definecolor{green}{rgb}{.4,.7,.4}
\definecolor{red}{rgb}{1,0,0}
\titleformat{\section}[block]{\scshape\centering}{\arabic{section}.}{1ex}{}{}
\begin{document}




\title[Right-angled triangles with almost prime hypotenuse]{Right-angled triangles with almost prime hypotenuse}

\author{Cihan Sabuncu}

\address{D\'epartement de math\'ematiques et de statistique\\
Universit\'e de Montr\'eal\\
CP 6128 succ. Centre-Ville\\
Montr\'eal, QC H3C 3J7\\
Canada}


\email{cihan.sabuncu@umontreal.ca}

\subjclass[2010]{}

\date{\today}

\begin{abstract}
The sequence OEIS A281505 consists of distinct odd legs in right triangles with integer sides and prime hypotenuse. In this paper, we count the closely related quantity of even legs with almost prime hypotenuse. More precisely, we obtain the correct order of magnitude upper and lower bounds for the set of distinct even legs with 5-almost prime hypotenuse. This is a strong version of the appropriate analogy to a conjecture of Chow and Pomerance (stated there for prime hypotenuse).
\end{abstract}

\maketitle

\section{Introduction}
Given a right-angled triangle we can parametrize its sides by $a^2-b^2 , 2ab, a^2+b^2$. The sequence OEIS A281505 counts the odd legs with prime hypotenuse $\{ n \leq N : \exists (a,b)\in \N^2 , n=a^2-b^2, a^2+b^2 \text{ prime, } 0<b<a \}$. In the same manner, we can study the set of even legs with prime hypotenuse $\CA'(N):=\{ n\leq N : \exists (a,b)\in \N^2, n=2ab, a^2+b^2 \text{ prime} \}$. We note that the condition $n=2ab$ can be changed to $n=ab$ by working with $\CA'(2N)$ instead. Therefore, we write $\CA(N):= \CA'(2N)$. A result of Chow and Pomerance \cite{MR3709664} shows
$$ \frac{N}{(\log N)^{c+o(1)}} \ll \#\CA(N) \ll \frac{N(\log\log N)^{O(1)}}{(\log N)^{\delta}} $$
where $\delta :=1-\frac{1+\log\log 2}{\log 2}=0.08607\cdots$ is the Erd\H{o}s-Tenenbaum-Ford constant appearing in the multiplication table problem, and $c=\log 4-1=0.38629\cdots$. Moreover, they conjecture the correct order of magnitude to be that of the upper bound. In this paper, we get sharp upper and lower bounds for the related set of even legs with almost prime hypotenuse,
\begin{equation*}
\CB(N)=\{n\leq N : \exists (a,b)\in \N^2, n=ab, \Omega(a^2+b^2)\leq 5 , P^-(a^2+b^2)>N^{1/9}\}
\end{equation*}
where $P^-(n)$ is the smallest prime factor of $n$. Our result agrees with their conjecture and also gives the power of the double-logarithmic factor.
\begin{thm}\label{main_theorem}
Let $N$ be a large number. We have
$$ \#\CB(N) \asymp \frac{N}{(\log N)^{\delta}\sqrt{\log\log N}}. $$
\end{thm}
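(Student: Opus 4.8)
The plan is to count the set $\CB(N)$ by a sieve-weighted version of the multiplication-table heuristic. Write $n = ab$ with, say, $b \le a$, so that $a^2 + b^2 \asymp a^2$ and $a \asymp \sqrt{n}$ whenever $b$ is not too small relative to $a$; the main contribution will come from the ``balanced'' range where $a$ and $b$ are both of size $n^{1/2 + o(1)}$, exactly as in the multiplication table problem. For the \textbf{upper bound}, I would start from
\[
\#\CB(N) \le \sum_{\substack{a,b \le \sqrt{N} \\ \Omega(a^2+b^2) \le 5,\ P^-(a^2+b^2) > N^{1/9}}} 1,
\]
and control overcounting (each $n=ab$ appears with multiplicity $d(n)$) by the standard device of restricting to $a,b$ in dyadic boxes and using that a typical $n \le N$ has $d(n) = (\log N)^{O(1)}$; more precisely one dyadically decomposes $a \in [A, 2A]$, $b \in [B,2B]$ with $AB \asymp N$, and within each box applies an upper-bound sieve (Selberg or the fundamental lemma) to the polynomial $a^2+b^2$ in the variable $a$, getting a saving of $(\log N)^{-1}$ for each of the $\le 5$ prime factors that is forced to lie in a dyadic range, combined with the Erd\H{o}s--Ford count of integers $\le N$ with a divisor in a prescribed dyadic interval. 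The key arithmetic input is that $a^2+b^2$ with $P^-$ large behaves, for the sieve, like a random integer constrained to be $\equiv 1 \pmod 4$ and coprime to small primes; the density of primes $p \equiv 1 \pmod 4$ dividing such values (each with ``probability'' $\tfrac{2}{p}$ from the two square roots of $-1$) is what produces the exponent $\delta$ rather than a larger power saving, and the $\tfrac{1}{\sqrt{\log\log N}}$ comes from the Gaussian/local-limit concentration of $\Omega$ around $\log\log N$ restricted to the relevant residue structure.

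For the \textbf{lower bound}, I would construct an explicit subfamily. Fix $a$ in a dyadic range $[A,2A]$ with $A \asymp \sqrt N$, and look for $b \le a$ with $\Omega(a^2+b^2) \le 5$ and $P^-(a^2+b^2) > N^{1/9}$; since any such value is $\le 2N^{2/9}\cdot(\text{something})$... more carefully, with $P^- > N^{1/9}$ an integer of size $\asymp N$ has at most $9$ prime factors, and one wants to push this down to $5$. The natural tool is a weighted sieve (Richert-type, or a Buchstab/Rosser--Iwaniec lower-bound sieve) applied to the sequence $\{a^2+b^2 : b \le \sqrt N\}$ with level of distribution coming from Bombieri--Vinogradov for the polynomial $X^2 + b^2$; the exponent $1/9$ in $P^-$ is chosen precisely so that the sieve produces $\Omega \le 5$ with positive proportion of the full count. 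One then must ensure these solutions generate $\gg N/((\log N)^\delta \sqrt{\log\log N})$ \emph{distinct} products $n = ab$; this is where one imports the lower-bound technology from the multiplication table problem (Ford's method, or the Chow--Pomerance argument that already gives the right-order lower bound up to the double-log power) and checks that restricting $a^2+b^2$ to be almost prime does not destroy the near-independence of $a$ and $b$ that makes the product set large.

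The \textbf{main obstacle}, and where most of the work lies, is the interplay between two competing combinatorial phenomena: the multiplication-table loss (which forces the $(\log N)^{-\delta}$ and, at the finer scale, the $(\log\log N)^{-1/2}$) and the almost-prime constraint on $a^2+b^2$. In the naive heuristic these would multiply, but the correct answer is that the $P^-$ and $\Omega$ conditions essentially do \emph{not} cost an extra power of $\log$ once one has fixed the dyadic location of $a$ and $b$ --- intuitively because demanding $ab$ have a divisor near $\sqrt N$ is ``most of the reason'' $ab$ fails to have many prime factors anyway. Making this precise requires a two-variable sieve analysis that simultaneously tracks the factorization type of $a$, of $b$, and of $a^2+b^2$, and I expect the technical heart of the paper to be a lemma asserting that among $n=ab \le N$ with $a,b$ in balanced dyadic boxes, a positive proportion have $a^2+b^2$ five-almost-prime with large $P^-$, with the proportion independent of $N$. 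The matching upper bound then needs the reverse inequality from a moment/sieve computation showing the count is not larger, and the delicate point there is getting the \emph{constant} power $1/2$ of $\log\log N$ rather than merely $(\log\log N)^{O(1)}$, which forces one to run the Ford-type argument at full strength rather than using the cruder Tenenbaum-style bounds.
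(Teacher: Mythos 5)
The central device of the paper is missing from your sketch: the restriction to integers with $\omega(n)\leq K$ where $K=\lfloor \log\log N/\log 2\rfloor$, combined with the second-moment (Cauchy--Schwarz) method. Concretely, the paper writes $\#\CB(N)\geq \big(\sum_{n\leq N,\,\omega(n)\leq K} r(n)\big)^2\big/\sum_{n\leq N,\,\omega(n)\leq K} r^2(n)$ and $\#\CB(N)\leq \sum_{n\leq N,\,\omega(n)\leq K} r(n)+\#\{n\leq N:\omega(n)>K\}$, and then proves that both restricted moments are $\asymp N(\log N)^{-\delta}(\log\log N)^{-1/2}$, the complement being handled by Hardy--Ramanujan. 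This is what simultaneously (i) produces the exponent $\delta$ --- it comes from $\sum_{\omega(h)\leq K}\tau(h)/h\ll (\log N)^{2-\delta}/\sqrt{\log\log N}$, i.e.\ from the multiplication-table threshold $2^{\omega(n)}\approx\log N$, and not, as you suggest, from the density $2/p$ of roots of $x^2+1$ (that density only enters the singular series, which is bounded on average) --- and (ii) disposes of the distinctness problem, which in your write-up is deferred to ``importing Ford's method or the Chow--Pomerance argument.'' That deferral is a genuine gap: the Chow--Pomerance lower bound loses a full power $(\log N)^{c+o(1)}$ with $c=\log 4-1>\delta$, so importing it cannot give the claimed order, and Ford's $H(x,y,2y)$ machinery is not set up to absorb the sieve condition on $a^2+b^2$ without substantial new work that you do not supply. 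The paper avoids all of this by showing, via Shiu's method with Selberg-sieve estimates in the auxiliary variables, that the restricted second moment of $r$ has the same order as its restricted first moment, so Cauchy--Schwarz loses only a constant.

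Your lower-bound construction also differs from, and is harder than, the paper's. Rather than balanced $a,b\asymp\sqrt N$, the paper takes highly unbalanced factorizations $n=a\cdot(bp)$ with $ab\leq N^{1/1000}$ squarefree, $\omega(ab)=K$ exactly, and $p$ prime, and applies Richert's weighted sieve to $a^2+p^2b^2$ in the prime variable $p$, where Bombieri--Vinogradov supplies level of distribution $1/2$ and the weighted-sieve bookkeeping yields $\Omega\leq 5$ together with $P^->N^{1/9}$. Forcing $\omega(ab)=K$ is again the point: it places the constructed $n$ in the set on which the second moment is controlled, so no separate distinct-products argument is needed. Your balanced version would require a level-of-distribution input for $a^2+b^2$ in the variable $b$ and, more seriously, still leaves the distinctness of the products $ab$ unresolved.
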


\begin{remark}
The condition $P^-(a^2+b^2)>N^{1/9}$ can be changed to $P^-(a^2+b^2)>N^{1/8+\eta}$ for any fixed $\eta >0$ with more bookkeeping on the exponents.
\end{remark}
\begin{remark}
Assuming the Elliott-Halberstam \cite{MR0276195} conjecture, we can get the Theorem \ref{main_theorem} with $\Omega(a^2+b^2)\leq 3$.
\end{remark}
\noindent For a typical $ab\leq N$, we guess $a^2+b^2\approx N$. Thus, for $a^2+b^2$ to be a prime, we need $P^-(a^2+b^2)>\sqrt{N}$. This should behave similar to $P^-(a^2+b^2)>N^{1/9}$. Thus, $\CA(N)$ should behave like $\CB(N)$. In accordance with this heuristic, we state our conjecture below.
\begin{conjecture}\label{main_conjecture}
Let $N$ be a large number. We have
$$ \#\CA(N) \asymp \frac{N}{(\log N)^{\delta} \sqrt{\log\log N}} .$$
\end{conjecture}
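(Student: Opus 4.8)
The plan is to treat the two inequalities separately. The upper bound turns out to be a formal consequence of Theorem \ref{main_theorem}, so it is unconditional; the lower bound is where the genuine difficulty lies, and it is the reason the statement is phrased as a conjecture rather than a theorem.

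For the upper bound, recall that $\#\CA(N)$ counts the distinct products $m=ab\leq N$ admitting a representation with $p:=a^2+b^2$ prime (this is exactly what the relabelling $\CA(N)=\CA'(2N)$ encodes). Fix such an $m$ together with a witnessing pair $(a,b)$. If $p\leq N^{1/9}$, then $m=ab\leq (a^2+b^2)/2=p/2\leq N^{1/9}/2$, so these $m$ number $O(N^{1/9})$ and are negligible. Otherwise $p>N^{1/9}$, and since $\Omega(p)=1\leq 5$ and $P^-(p)=p>N^{1/9}$, the \emph{same} pair $(a,b)$ witnesses $m\in\CB(N)$. Hence $\CA(N)\subseteq\CB(N)\cup\{m\leq N^{1/9}/2\}$, so $\#\CA(N)\leq\#\CB(N)+O(N^{1/9})$, and the upper bound of the conjecture follows at once from Theorem \ref{main_theorem}.

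For the lower bound I would try to re-run the construction behind the lower bound for $\CB(N)$, upgrading "almost prime" to "prime". Concretely, one fixes the multiplicative anatomy of $n$ responsible for the density $(\log N)^{-\delta}(\log\log N)^{-1/2}$, i.e. one restricts to the Ford-type rich region of integers $n\leq N$ possessing a factorization $n=ab$ with $a,b$ in balanced dyadic ranges, and one controls the distinctness of the products $ab$ exactly as in the $\CB$ argument. The aim is then to show that a positive proportion of the pairs $(a,b)$ used to produce the $\CB$ lower bound can be taken with $a^2+b^2$ genuinely prime; this would transfer the bound $\gg N/((\log N)^{\delta}\sqrt{\log\log N})$ from $\CB$ to $\CA$.

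The main obstacle is the parity problem. Detecting \emph{primes} (rather than numbers with $\Omega\leq 5$, or even $\Omega\leq 3$ under the Elliott–Halberstam conjecture as in the second remark) among the values $a^2+b^2$, as $(a,b)$ ranges over the constrained set of factorizations $ab=n$, cannot be achieved by any upper or lower bound sieve: such sieves are parity-blind and deliver only almost-primes, so even the full strength of Elliott–Halberstam does not reach $\Omega=1$ here. Breaking the barrier would require a genuine bilinear (Type-II) estimate for the family $\{a^2+b^2:ab=n\}$, in the spirit of the Friedlander–Iwaniec theorem on primes of the form $a^2+b^2$, but adapted to this two-dimensional, multiplicatively constrained setting and uniform enough to coexist with the demands that the products $ab$ be distinct and lie in the multiplication-table-rich region. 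Producing such an input — in effect a Bombieri–Vinogradov or asymptotic-sieve-for-primes result for this family — is beyond current technology, which is precisely why the lower bound, and hence the conjecture, remains open.
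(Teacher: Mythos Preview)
Your treatment of the upper bound is correct and matches the paper's one-line remark that it follows from Theorem~\ref{main_theorem}: the inclusion $\CA(N)\subseteq\CB(N)\cup\{m\leq N^{1/9}/2\}$ is exactly the content of that remark, and you have spelled out the easy verification.

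For the lower bound, you correctly recognise that it is open and that the parity barrier is the obstruction; this is consistent with the paper. However, the paper's remark is sharper than the programme you sketch. Rather than vaguely ``re-running the construction'' over a Ford-type region with balanced dyadic ranges, the paper observes that the second-moment machinery already in place does most of the work: since $r^*(n)\leq r(n)$ for all $n>N^{1/9}$ (and the remaining $n$ contribute negligibly), the second-moment upper bound of Theorem~\ref{what_we_want_to_prove} transfers verbatim to $r^*$. Hence, by the Cauchy--Schwarz inequality \eqref{second_moment_method}, the full Conjecture~\ref{main_conjecture} reduces to the \emph{single} first-moment estimate
\[
\sum_{\substack{n\leq N\\ \omega(n)\leq K}} r^*(n)\ \gg\ \frac{N}{(\log N)^{\delta}\sqrt{\log\log N}}.
\]
Your description of the missing input (a bilinear/Type-II estimate for primes of the form $a^2+b^2$ in this constrained setting) is a reasonable diagnosis of why this last inequality is out of reach, but note that the ``distinctness of products'' issue you raise is already handled by the second-moment bound and is not part of what remains. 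Also, the paper's actual lower-bound construction for $\CB(N)$ is not a Ford-style balanced-dyadic argument but rather takes $n=abp$ with $ab\leq N^{1/1000}$ squarefree, $\omega(ab)=K$, and $p$ a large prime, then applies a weighted sieve in $p$; the analogous step for $\CA$ would be to count primes $p$ with $a^2+p^2b^2$ prime, which is a twin-prime-type problem and is precisely where parity bites.
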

\begin{remark}
The upper bound in Conjecture \ref{main_conjecture} follows from the upper bound in Theorem \ref{main_theorem}. Moreover, to get the lower bound we use the second moment method (see \eqref{second_moment_method}). So, we study the average and the second moment of $ r^*(n) := \#\{ (a,b)\in \N : n=ab, a^2+b^2 \text{ prime} \}$ on the subset $\{ n\leq N : \omega(n)\leq \frac{\log\log N}{\log 2} \}$ . We need a lower bound for the average and an upper bound for the second moment. The upper bound for the second moment follows from Theorem \ref{what_we_want_to_prove} below. Thus, to get the lower bound in Conjecture \ref{main_conjecture} we need
$$ \sum_{\substack{n\leq N \\ \omega(n)\leq \frac{\log\log N}{\log 2}}}r^*(n) \gg \frac{N}{(\log N)^{\delta} \sqrt{\log\log N}}. $$
\end{remark}
\subsection*{Idea of the proof:}
We define our representation function
$$ r(n)= \#\{ (a,b)\in \N^2 : n=ab, \Omega(a^2+b^2)\leq 5 , P^-(a^2+b^2)>N^{1/9} \}. $$
Then we have
\begin{equation} \label{second_moment_method}
 (\supp r\cap [1,N]) = \#\CB(N) \geq \bigg(\sum_{\substack{n\leq N \\ n\in\CD }} r(n) \bigg)^2 \bigg/ \bigg( \sum_{\substack{n\leq N \\ n\in \CD}} r^2(n) \bigg),
\end{equation}
by the Cauchy-Schwarz inequality, for any $\CD\subset \N$. We want to maximize this ratio by taking a set $\CD$ such that the second moment is on the same order as the mean. We notice that if $r(n)\ll 1$, then 
$$\sum_{\substack{n\leq N \\ n\in \CD}}r(n)\leq \sum_{\substack{n\leq N \\ n\in \CD}} r^2(n) \ll \sum_{\substack{n\leq N \\ n\in \CD}} r(n).$$
Thus, we want a set $\CD \subset \N$ where $r(n)$ is almost constant. We guess that $r(n)\approx \tau(n)/(\log N^{1/9})$, so we want $\tau(n)\ll \log N^{1/9}$. For a square-free $n$ we have $\tau(n) = 2^{\omega(n)}\ll \log N^{1/9}$. This is integers with $\omega(n) \leq \frac{\log\log N}{\log 2} + O(1)$. Hence, we take
$$ \CD := \{ n\in \N : \omega(n)\leq K \} \text{ where } K:=\bigg\lfloor \frac{\log\log N}{\log 2} \bigg\rfloor. $$
Moreover, we have
$$\#\CB(N) \leq \sum_{\substack{n\leq N \\ \omega(n)\leq K}} r(n) + \#\{ n\leq N : \omega(n)>K \}. $$
We use the Hardy-Ramanujan theorem to bound the size of the set appearing here. Hence, Theorem \ref{main_theorem} will follow from Lemma \ref{omega_large}, and Theorem \ref{what_we_want_to_prove} below.
\begin{thm}\label{what_we_want_to_prove}
Let $N$ be a large number. We have
\begin{align*}
\sum_{\substack{n\leq N \\ \omega(n)\leq K}} r(n) &\asymp \frac{N}{(\log N)^{\delta} \sqrt{\log\log N}}, \\
\sum_{\substack{n\leq N \\ \omega(n) \leq K}} r^2(n) &\ll \frac{N}{(\log N)^{\delta} \sqrt{\log\log N}}.
\end{align*}
\end{thm}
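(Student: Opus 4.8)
The plan is to prove the three estimates making up Theorem~\ref{what_we_want_to_prove}: the upper and lower bounds for the first moment and the upper bound for the second moment. Throughout, rewrite $\sum_{n\le N,\ \omega(n)\le K}r(n)$ as the number of pairs $(a,b)\in\N^2$ with $ab\le N$, $\omega(ab)\le K$ and $a^2+b^2$ being $N^{1/9}$-rough with $\Omega(a^2+b^2)\le5$, and $\sum_{n\le N,\ \omega(n)\le K}r(n)^2$ as the number of quadruples $(a_1,b_1,a_2,b_2)$ with $a_1b_1=a_2b_2=n\le N$, $\omega(n)\le K$ and each $a_i^2+b_i^2$ being $N^{1/9}$-rough with $\Omega(a_i^2+b_i^2)\le5$. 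The first step is the reduction to coprime pairs: if a prime $p$ divides $\gcd(a,b)$ then $p^2\mid a^2+b^2$, which forces $p>N^{1/9}$ and $\gcd(a,b)\le\sqrt N$, so the contribution of such pairs is $O(N^{8/9}\log N)$, negligible against the target. Hence we may assume $\gcd(a,b)=1$ and, since $P^-(a^2+b^2)>N^{1/9}$ excludes the prime $2$, that $a,b$ have opposite parity. The structural fact we repeatedly use is that for coprime $a,b$ the primes dividing $a^2+b^2$ are disjoint from those dividing $ab$ (if $p\mid a^2+b^2$ and $p\mid a$ then $p\mid b$); thus the condition $\omega(ab)\le K$ and the roughness/almost-primality of $a^2+b^2$ are governed by disjoint sets of primes and are, up to admissible errors, independent.

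\textbf{First moment.} Put $S=\{(a,b)\in\N^2:ab\le N,\ \gcd(a,b)=1,\ a,b\text{ of opposite parity},\ \omega(ab)\le K\}$, so that the first moment is, up to a bounded factor, $\#\{(a,b)\in S:a^2+b^2\text{ is }N^{1/9}\text{-rough with }\Omega\le5\}$. The disjointness above makes $\{a^2+b^2:(a,b)\in S\}$ amenable to the sieve with the expected density $\approx2/p$ at primes $p\equiv1\pmod4$ (and negligible otherwise), i.e.\ a one-dimensional sieve, and with level of distribution a fixed positive power of $N$ --- supplied by the box structure of the $ab\le N$ summation together with a Bombieri--Vinogradov-type input absorbing the constraint $\omega(ab)\le K$. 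For the upper bound, a Selberg sieve sifting the primes below $N^{1/9}$ yields $\ll|S|/\log N$; for the lower bound, the Richert weighted sieve with level $\asymp N^{1/2}$ and sifting parameter $z=N^{1/9}$ (so that the ratio $\tfrac{\log N^{1/2}}{\log N^{1/9}}=\tfrac92$ is large enough to retain a $P_5$) yields $\gg|S|/\log N$. It remains to evaluate $|S|$; dropping the parity condition changes it by a bounded factor, and $\#\{(a,b):ab\le N,\gcd(a,b)=1,\omega(ab)\le K\}=\sum_{n\le N,\ \omega(n)\le K}2^{\omega(n)}$. On $\{ab\le N\}$, $\omega(ab)$ behaves through its coprime factorizations like a random variable of mean $\sim2\log\log N$, whereas $K\sim\log\log N/\log2<2\log\log N$, so a lower-tail large-deviation estimate (equivalently the Sathe--Selberg formula, whose range $K<2\log\log N$ is respected) gives $\sum_{n\le N,\ \omega(n)\le K}2^{\omega(n)}\asymp N(\log N)^{1-\delta}(\log\log N)^{-1/2}$ --- the constant $\delta$ being exactly the large-deviation rate at the critical ratio $K/(2\log\log N)=1/(2\log2)$, which is where the Erd\H{o}s--Tenenbaum--Ford constant enters. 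Combining with the $(\log N)^{-1}$ from the sieve gives the first moment $\asymp N(\log N)^{-\delta}(\log\log N)^{-1/2}$, with a matching lower bound from the weighted-sieve step.

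\textbf{Second moment.} Parametrise the quadruples by $g=\gcd(a_1,a_2)$: writing $a_1=gu$, $a_2=gv$ with $\gcd(u,v)=1$, the equation $a_1b_1=a_2b_2$ forces $b_1=vw$, $b_2=uw$, so $n=guvw$. The diagonal $\{a_1,b_1\}=\{a_2,b_2\}$ (i.e.\ $u=v=1$ or $w=g$) contributes twice the first moment, which is of the right size. For the off-diagonal, discard the loci where $a_1^2+b_1^2$ and $a_2^2+b_2^2$ share a prime factor (necessarily $>N^{1/9}$, hence $O(N^{-1/9})$ of the total) and where $n$ is not squarefree, and apply a Selberg sieve to the product $\bigl((gu)^2+(vw)^2\bigr)\bigl((gv)^2+(uw)^2\bigr)$ in the four variables $g,u,v,w$: this is a two-dimensional sieve of positive level, so sifting below $N^{1/9}$ saves $\asymp(\log N)^{-2}$ over the count $\sum_{n\le N,\ \omega(n)\le K}\tau(n)^2$ of all admissible quadruples. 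The same large-deviation/Sathe--Selberg estimate (now with $4^{\omega(n)}$ in place of $2^{\omega(n)}$) gives $\sum_{n\le N,\ \omega(n)\le K}\tau(n)^2\asymp N(\log N)^{2-\delta}(\log\log N)^{-1/2}$, so the off-diagonal is $\ll(\log N)^{-2}\cdot N(\log N)^{2-\delta}(\log\log N)^{-1/2}=N(\log N)^{-\delta}(\log\log N)^{-1/2}$, i.e.\ of the order of the first moment and not its square, as required.

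\textbf{Main obstacle.} I expect the principal difficulty to be the two-dimensional sieve in the second moment: one must run the Selberg upper bound uniformly over all dyadic shapes of $(g,u,v,w)$ --- including unbalanced ones, where the level of distribution drops and a separate, more delicate argument (or a direct divisor-type bound) is needed --- and one must control the degenerate loci, in particular where $(gu)^2+(vw)^2$ and $(gv)^2+(uw)^2$ fail to be coprime or where $n$ is not squarefree. A secondary difficulty is the first-moment lower bound, where the Richert weighted sieve has to manufacture $P_5$'s subject to the stringent roughness condition $P^-(a^2+b^2)>N^{1/9}$; the constants $5$ and $1/9$ encode the trade-off between the available level of distribution ($\sim N^{1/2}$ unconditionally, and $\sim N^{1-\eps}$ under Elliott--Halberstam, which is what makes $\Omega\le3$ possible) and the sifting parameter, and the extra bookkeeping announced for $N^{1/8+\eta}$ is a sharpening of this calibration.
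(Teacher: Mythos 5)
Your numerology is right, and you have correctly located the source of the constant $\delta$ and of the factor $\sqrt{\log\log N}$ (the lower tail of $\omega$ weighted by $2^{\omega(n)}$, resp.\ $4^{\omega(n)}$, at the threshold $K$, multiplied by a sieve saving). But the central step of your argument --- running a Selberg (and, for the lower bound, a Richert) sieve directly on the sequence $\{a^2+b^2:(a,b)\in S\}$ with $S=\{ab\le N,\ \gcd(a,b)=1,\ \omega(ab)\le K\}$ and ``level of distribution a fixed positive power of $N$'' --- is asserted rather than proved, and it fails as stated. The region $ab\le N$ is a hyperbola, not a box: for a modulus $d$ the pairs with $\min(a,b)\le d$ number $\gg N\log d$, which for $d$ a power of $N$ is already comparable to the full count $\asymp N\log N$, and for such pairs the inner variable runs over an interval of length at most $d$ and cannot equidistribute among the admissible residue classes modulo $d$; so the individual remainders are unacceptable once $d\gg\log N$. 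No Bombieri--Vinogradov theorem is available for this sequence, let alone for the sparse subset cut out by $\omega(ab)\le K$, and ``absorbing'' that constraint into the equidistribution input is precisely the difficulty, not a footnote to it. This is why the paper never sieves over $S$: it uses Shiu's method, writing (for $a\le b$) $b=dm$ with $d\le N^{3/8}$ smooth and $m$ rough, so that the sieve is applied to the single long variable $m$ (an interval of length $\ge N^{1/8}$, where equidistribution to small moduli is elementary), while the constraint $\omega(ad)\le K$ lives entirely on the outer variables and is handled by Erd\H{o}s' trick, Stirling, Lemma \ref{fixed_prime_factor_smooth_number_calculation}, and the case analysis on $P^-(m)$ and $P^+(d)$. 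The sieve dimension is then $2$ (resp.\ $3$ for the second moment) rather than your $1$ (resp.\ $2$), because the roughness of $m$ is sieved together with the quadratic(s); the books still balance because the outer sums carry $\tau(h)$ (resp.\ $\tau_4(h)$) rather than $1$.

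The same gap sinks your lower bound: a Richert sieve ``with level $\asymp N^{1/2}$'' over all of $S$ would again require a Bombieri--Vinogradov input for the hyperbola with the $\omega$-constraint. The paper instead restricts to the much thinner configuration $n=abp$ with $ab\le N^{1/1000}$ squarefree, $\omega(ab)=K$ and $p$ prime, and sieves $a^2+p^2b^2$ over the single prime variable $p$, where the genuine Bombieri--Vinogradov theorem applies; this costs only a constant because the mass $(2\log\log N)^{K-1}/(K-1)!$ is already attained on such $n$. Your second-moment parametrization ($a_1=gu$, $a_2=gv$, $b_1=vw$, $b_2=uw$) is essentially the paper's $d_1d_2m_1m_2$ decomposition and the diagonal/off-diagonal split is correct, but the off-diagonal estimate reduces to the same unproved equidistribution claim. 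In short: the skeleton and the answer are right, but the proposal omits the mechanism (the smooth--rough factorization) that makes the sieve legitimate, and that mechanism is the substance of the proof.
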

\noindent We use the technique developed in Shiu \cite{MR0552470} to prove the upper bounds in Section \ref{section_upper_bounds}. We will prove the lower bound of the average of $r(n)$ in Section \ref{section_lower_bound} using a weighted sieve result of Richert (see \cite{MR0424730}[Theorem 9.3]) in the subset of square-free integers $n\leq N$ with exactly $K$ prime factors. \\
\subsection*{Notation:} We will use the standard asymptotic notation. $N$ will be a large integer and $K:=\lfloor \frac{\log\log N}{\log 2} \rfloor$ like above. We will also define a cut off variable;
\begin{equation} \label{cut_off_variable}
C(N) := N^{\log\log\log\log N/\log\log N}. 
\end{equation}
We will use $p$ for primes, $m,n,a,b,d,h,k,u,v$ with or without subscripts for integers. $\omega(n)$ will be the number of prime factors of $n$, and $\tau(n)$ will be the divisor function. We will also use $\tau_k(n)=\#\{d_1d_2\cdots d_k=n\}$ for the k-divisor function. $P^+(n)$ will be the largest prime factor of $n$ and $P^-(n)$ will be the smallest prime factor of $n$.

\subsection*{Acknowledgements} The author is grateful to Andrew Granville for his continued guidance and suggestions. He wishes to thank Dimitris Koukoulopoulos for helpful discussions. He would also like to thank Tony Haddad, Sun-Kai Leung, Stelios Sachpazis and Christian T\'afula for their comments.

\section{Preliminary Lemmas}

\begin{lem}\label{omega_large}
Let $N$ be a large number. We have
$$ \#\{ n\leq N : \omega(n)>K \} \ll \frac{N}{(\log N)^{\delta}\sqrt{\log\log N}}. $$
\end{lem}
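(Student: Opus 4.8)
The plan is to apply the Hardy--Ramanujan--type estimate for the counting function
$$\pi_k(N) := \#\{ n \leq N : \omega(n) = k \},$$
combined with the fact that $K = \lfloor \log\log N / \log 2 \rfloor$ is \emph{larger} than the typical value $\log\log N$ of $\omega(n)$. The classical bound of Hardy and Ramanujan (see, e.g., Montgomery--Vaughan or Tenenbaum) states that there is an absolute constant $B>0$ such that, uniformly for $k \geq 1$,
$$\pi_k(N) \ll \frac{N}{\log N}\cdot \frac{(\log\log N + B)^{k-1}}{(k-1)!}.$$
First I would sum this over all $k > K$. Writing $L := \log\log N$, I would bound
$$\#\{n \leq N : \omega(n) > K\} \ll \frac{N}{\log N}\sum_{k > K} \frac{(L+B)^{k-1}}{(k-1)!} \leq \frac{N}{\log N}\sum_{j \geq K} \frac{(L+B)^{j}}{j!}.$$
Since $K = L/\log 2 + O(1)$ and $\log 2 < 1$, the ratio $\lambda := (L+B)/K$ is bounded above by $\log 2 + o(1) < 1$ for large $N$, so the sum is dominated by its first term up to a geometric factor: $\sum_{j \geq K}(L+B)^j/j! \ll (L+B)^K/K!$.

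The remaining task is to show that
$$\frac{N}{\log N}\cdot\frac{(L+B)^K}{K!} \ll \frac{N}{(\log N)^{\delta}\sqrt{L}},$$
i.e. that $\frac{(L+B)^K}{K!} \ll \frac{(\log N)^{1-\delta}}{\sqrt{L}}$. By Stirling's formula, $K! = \sqrt{2\pi K}\,(K/e)^K (1+o(1))$, and with $K = L/\log 2 + O(1)$ one has $\sqrt{K} \asymp \sqrt{L}$, so it suffices to check that
$$\Big(\frac{e(L+B)}{K}\Big)^K \ll (\log N)^{1-\delta}.$$
Taking logarithms, the left side has logarithm $K\big(1 + \log(L+B) - \log K\big) = K\big(1 + \log\log 2 + o(1)\big)$, using $\log K = \log L - \log\log 2 + o(1)$ and $\log(L+B) = \log L + o(1)$. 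Since $K = L/\log 2 + O(1)$, this equals
$$\frac{L}{\log 2}\big(1 + \log\log 2\big) + o(L) = \Big(\frac{1 + \log\log 2}{\log 2}\Big) L + o(L) = (1-\delta)L + o(L),$$
by the definition $\delta = 1 - \frac{1+\log\log 2}{\log 2}$. Since $L = \log\log N$ and $\log((\log N)^{1-\delta}) = (1-\delta)\log\log N = (1-\delta)L$, the two sides match to leading order; I would need to track the $o(L)$ error carefully enough to absorb the $\sqrt{L}$ and the geometric constant, which is where a small amount of care is required — in particular verifying that the floor in the definition of $K$ and the additive constant $B$ only contribute bounded multiplicative factors, not factors growing with $N$. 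This bookkeeping around the non-integer optimum $K^* = L$ versus the actual cutoff $K = L/\log 2$ is the only genuinely delicate point; the rest is a direct Hardy--Ramanujan-plus-Stirling computation.
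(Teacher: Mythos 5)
Your proposal is correct and follows exactly the paper's argument: the Hardy--Ramanujan bound on $\pi_k(N)$, summation over $k>K$ dominated by the first term since $(\log\log N+B)/K\to\log 2<1$, and Stirling's formula turning $(\log\log N+O(1))^K/K!$ into $(\log N)^{1-\delta}/\sqrt{\log\log N}$. The ``delicate point'' you flag is harmless: writing $K=\log\log N/\log 2-\theta$ with $0\leq\theta<1$, the exponent $K(1+\log(\log\log N+B)-\log K)$ equals $(1-\delta)\log\log N+O(1)$ (not just $o(\log\log N)$), so the floor and the constant $B$ contribute only bounded multiplicative factors, exactly as you suspected.
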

\begin{proof}
We let $k\geq K$, then we have by the Hardy-Ramanujan theorem,
$$ \sum_{\substack{n\leq N \\ \omega(n)=k}} 1 \ll \frac{N}{\log N} \frac{(\log\log N + O(1))^{k-1}}{(k-1)!}. $$
Now, we sum over $k\geq K$, which gives us
\begin{align*}
\sum_{\substack{n\leq N \\ \omega(n)\geq K}}1 &\ll \frac{N}{\log N} \sum_{k\geq K}  \frac{(\log\log N + O(1))^{k-1}}{(k-1)!} \\
&\ll \frac{N}{\log N} \frac{(\log\log N + O(1))^K}{K!} \asymp \frac{N}{(\log N)^{\delta} \sqrt{\log\log N}},
\end{align*}
by Stirling's approximation $m!\sim \sqrt{2\pi m}(m/e)^m$.
\end{proof}

\begin{lem}\label{beta_sieve_result}
Let $M$ and $N$ be large numbers with $M > N$ and $a,b\in \N$ with $2|ab$. Then
$$ \#\{ n \leq M : P^-(n(a^2+n^2b^2)) >N \} \ll \frac{M}{(\log N)^2} \mathfrak{S}(a,b) ,$$
where
$$ \mathfrak{S}(a,b) = \prod_{p\nmid ab}\bigg( 1 - \frac{2+\chi_4(p)}{p}\bigg)\bigg(1 - \frac{1}{p}\bigg)^{-2} \prod_{p|ab} \bigg(1 - \frac{1}{p} \bigg)^{-1}, $$
and $\chi_4$ is the non-principal Dirichlet character $\bmod 4$. Moreover,
\begin{equation}\label{first_singular_series_bound}
\mathfrak{S}(a,b)\ll \sum_{r|ab}\frac{\mu^2(r)2^{\omega(r)}}{r} .
\end{equation}
\end{lem}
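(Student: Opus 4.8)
The plan is to treat this as an upper-bound sieve estimate for the cubic polynomial $F(n) = n(a^2+n^2b^2)$. Writing $P(z) = \prod_{p<z}p$, the quantity to bound is $S = \#\{n\le M : \gcd(F(n),P(N)) = 1\}$, the result of sifting the sequence $\CA = (F(n))_{n\le M}$ by all primes below $N$. I would run the $\beta$-sieve (Rosser--Iwaniec) of dimension $\kappa = 2$ to get $S \ll M\,V(N)$ with $V(N) = \prod_{p<N}(1-\rho(p)/p)$, and then use Mertens-type estimates to rewrite $V(N)$ in terms of $\mathfrak{S}(a,b)$.

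First I would record the local densities $\rho(p) = \#\{\nu \bmod p : p\mid F(\nu)\}$. Since $2\mid ab$, every prime $p\nmid ab$ is odd, and for such $p$ the roots of $p\mid F(\nu)$ are $\nu\equiv 0$ together with the roots of $\nu^2\equiv -(ab^{-1})^2 \bmod p$, of which there are $1+\chi_4(p)$ — two if $p\equiv 1\bmod 4$ and none otherwise — and none equal to $0$; hence $\rho(p) = 2+\chi_4(p)$. If $p\mid ab$ with $p$ dividing exactly one of $a,b$ then only $\nu\equiv 0$ survives and $\rho(p)=1$, while if $p\mid\gcd(a,b)$ then $\rho(p)=p$ and, for $p\le N$, the set being counted is empty so the bound is trivial; hence I may assume the latter does not occur. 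The sieve then has dimension $2$: from $\sum_{p\le x}\chi_4(p)(\log p)/p = O(1)$ (the prime number theorem for the character $\chi_4$) and Mertens, one gets $\sum_{w\le p<z}\rho(p)(\log p)/p \le 2\log(\log z/\log w) + O(1)$ uniformly, the finitely many primes $p\mid ab$ only lowering the left-hand side since there $\rho(p)=1\le 2+\chi_4(p)$.

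For the error terms, $\#\{n\le M : d\mid F(n)\} = (\rho(d)/d)M + R_d$ with $|R_d|\le\rho(d)\le 3^{\omega(d)}$ (multiplicativity and $\rho(p)\le 3$), so $\sum_{d<D}\mu^2(d)|R_d|\ll D(\log D)^2$; taking $D = M(\log M)^{-5}$ makes the total error $\ll M(\log M)^{-3}$, negligible against the asserted bound since $\mathfrak{S}(a,b)\gg 1$. The $\beta$-sieve of dimension $2$ then delivers $S\ll M\,V(N)$ provided $\log D/\log N$ exceeds the sifting limit for dimension $2$; this holds when $M$ is at least a fixed power of $N$, which is how the lemma is applied, and when it fails — so $N<M\le N^{O(1)}$ — the constraint $P^-(n)>N$ already forces $n$ to be $1$ or a product of boundedly many primes exceeding $N$, and an easy one-variable sieve applied to $a^2+n^2b^2$ supplies the remaining factor $1/\log N$. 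Finally, splitting $V(N) = \prod_{p\mid ab,\,p<N}(1-1/p)\prod_{p\nmid ab,\,p<N}(1-(2+\chi_4(p))/p)$, inserting and removing factors $(1-1/p)^{\pm 2}$, and invoking $\prod_{p<N}(1-1/p)\sim e^{-\gamma}/\log N$ together with the conditional convergence of $\prod_{p\text{ odd}}(1-(2+\chi_4(p))/p)(1-1/p)^{-2}$ (its $p$-th factor being $1-\chi_4(p)/p + O(1/p^2)$), I would obtain $V(N)\ll\mathfrak{S}(a,b)/(\log N)^2$, the Euler factors over $p\mid ab$ matching those appearing in the definition of $\mathfrak{S}(a,b)$.

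For the bound \eqref{first_singular_series_bound}, the same bookkeeping shows $\mathfrak{S}(a,b)$ equals an absolute constant times $\prod_{p\mid ab,\,p\text{ odd}}\frac{1-1/p}{1-(2+\chi_4(p))/p} = \prod_{p\mid ab,\,p\equiv 1\bmod 4}\frac{p-1}{p-3}$, the factor being $1$ for $p\equiv 3\bmod 4$. Since $\log\frac{p-1}{p-3} = \log(1+2/p) + O(1/p^2)$ for $p\ge 5$, summing over $p\mid ab$ gives $\prod_{p\mid ab,\,p\equiv 1\bmod 4}\frac{p-1}{p-3}\ll\prod_{p\mid ab}(1+2/p) = \sum_{r\mid ab}\frac{\mu^2(r)2^{\omega(r)}}{r}$. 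I expect the main obstacle to be uniformity in $a$ and $b$: because $\omega(ab)$ is unbounded, the local factors at $p\mid ab$, the dimension estimate, and the passage from $V(N)$ to $\mathfrak{S}(a,b)$ all have to be controlled by absolute implied constants rather than ones depending on $ab$.
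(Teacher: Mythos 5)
Your proposal follows, in substance, the paper's own route: an upper--bound sieve of dimension $2$ applied to the polynomial $n(a^2+n^2b^2)$, with the same local densities ($\nu_p=2+\chi_4(p)$ for $p\nmid ab$, $\nu_p=1$ for $p\mid ab$, the case $p\mid\gcd(a,b)$, $p\leq N$ being vacuous), the same Mertens-type evaluation of $\prod_{p<N}(1-\nu_p/p)$, and the same completion-of-the-product argument for \eqref{first_singular_series_bound} via $\prod_{p\mid ab}(1+\frac{1+\chi_4(p)}{p})\leq\sum_{r\mid ab}\mu^2(r)2^{\omega(r)}/r$. The only structural difference is that you invoke the Rosser--Iwaniec $\beta$-sieve where the paper uses the Selberg sieve; that choice is immaterial, and your worry about uniformity in $a,b$ is handled correctly by the absolute convergence of the relevant Euler products.

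There is, however, one genuine gap. Your main argument needs $s=\log D/\log N$ to exceed the sifting threshold for the dimension-$2$ upper-bound $\beta$-sieve, which is strictly larger than $1$; with $D\approx M$ and only the hypothesis $M>N$, you can have $s=1+o(1)$, and this regime is not avoidable in the applications (the lemma is invoked with $M$ as small as roughly the $9/8$ power of the sifting parameter). Your proposed fallback --- first note that $P^-(n)>N$ forces $n$ to have boundedly many prime factors, then let ``an easy one-variable sieve applied to $a^2+n^2b^2$'' supply the remaining $1/\log N$ --- does not work as stated: the two conditions constrain the same variable $n$, so the two savings do not multiply; making that heuristic rigorous sends you back to a two-dimensional sieve on the product polynomial, which is the problem you started with. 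The standard repair, and the one the paper uses, is to replace $P^-(\cdot)>N$ by the weaker condition $P^-(\cdot)>N^{\delta}$ for a small fixed $\delta>0$; then $s\geq\delta^{-1}(1-o(1))$ is as large as needed, at the cost of an absolute factor $\asymp\delta^{-2}$ in the Mertens product, which is harmless. (A small slip elsewhere: the dimension condition should read $\sum_{w\leq p<z}\nu_p\log p/p\leq 2\log(z/w)+O(1)$, not $2\log(\log z/\log w)+O(1)$.) With that one modification your argument goes through and yields $\ll M\,\mathfrak{S}(a,b)/(\log N)^2$ as claimed.
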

\begin{proof}
This is an application of the Selberg sieve (see \cite{MR2647984}[Theorem 7.14]). We first note that
$$ \#\{ n \leq M : P^-(n(a^2+n^2b^2)) >N \} \leq \#\{ n \leq M : P^-(n(a^2+n^2b^2)) >N^{\delta} \},  $$
where we will choose $0<\delta <1$ later. We will bound the set on the right. Also note that we can assume $2$ can't divide both $a$ and $b$, otherwise this set is empty and we have the upper bound trivially. \\
To start, we write
\begin{equation}\label{splitting_idea}
\sum_{\substack{n\leq M \\ d|n(a^2+n^2b^2)}} 1 = \sum_{\substack{m \mod d \\ m(a^2+m^2b^2)\equiv 0 \mod d}} \sum_{\substack{n\leq M \\ n\equiv m \mod d}} 1 = \frac{\nu_d}{d} M + O(\nu_d) ,
\end{equation}
where
$$ \nu_d : = \#\{ m\mod d : m(a^2+m^2b^2)\equiv 0 \mod d \}. $$
Then we have $\nu_d \geq 1$ as $m\equiv 0 \mod d$ is always a solution. Note also that $\nu_2=1$ since $2$ divides exactly one of $a$ or $b$. We just need to see for $2\leq v \leq w$
$$ \sum_{v\leq p \leq w} \frac{\nu_p \log p}{p} \leq \sum_{\substack{v\leq p \leq w}} \frac{3\log p}{p} \ll \log \frac{2w}{v} . $$
Then we choose $D=N$ and we want to bound 
$$ J(D)=\sum_{d|P(N^{\delta}) , d<D} h(d) = \prod_{p<N^{\delta}} (1 + h(p)) + \sum_{d|P(N^{\delta}), d\geq D} h(d),$$
where $P(N)=\prod_{p\leq N}p$, and $h(p)=(1-\nu_p/p)^{-1}\nu_p/p$, which is well defined as $\nu_2=1$. We bound the second sum,
\begin{align*}
\sum_{d|P(N^{\delta}), d\geq D} h(d) \ll \frac{e^{O(\delta^{-1})}}{(\delta^{-1}\log \delta^{-1})^{\delta^{-1}}} \exp\bigg( \sum_{p\leq N^{\delta}} h(p) \bigg) \ll \frac{e^{O(\delta^{-1})}}{(\delta^{-1}\log \delta^{-1})^{\delta^{-1}}} \prod_{p<N^{\delta}} (1 + h(p)),
\end{align*}
where we use Koukoulopoulos \cite{MR3971232}[Theorem 16.3] to get the first inequality. For the second inequality, we use $\log(1+x)\geq x-\frac{x^2}{2}$ for $0\leq x \leq 1$, and use $h(p)\ll 1/p$ to bound the term $\sum_{p\leq N^{\delta}} \frac{h(p)^2}{2} \ll 1$. We choose $\delta<1$ small enough so that this sum is $< \frac{1}{2} \prod_{p<N^{\delta}} (1 + h(p))$. Thus, we get $J(D) \gg \prod_{p<N^{\delta}} (1 + h(p)) = \prod_{p<N^{\delta}}(1-\nu_p/p)^{-1} \asymp (\log N)^2 \prod_{p\geq 2}(1-\nu_p/p)^{-1}(1-1/p)^2$ by Mertens' theorem. \\
For \eqref{first_singular_series_bound}, we complete the product over $p\nmid ab$ to get
$$ \mathfrak{S}(a,b)\asymp \prod_{p|ab} \bigg( 1 - \frac{2+\chi_4(p)}{p}\bigg)^{-1} \bigg( 1  - \frac{1}{p}\bigg) \asymp \prod_{p|ab} \bigg(1 + \frac{1+\chi_4(p)}{p} \bigg) \ll \sum_{r|ab}\frac{\mu^2(r)2^{\omega(r)}}{r},$$
where we use $\prod_{p|ab}(1-\frac{2+\chi_4(p)}{p})^{-1}(1-\frac{1}{p})(1+\frac{1+\chi_4(p)}{p})^{-1} \asymp 1$ since it is an absolutely convergent product.
\end{proof}

\begin{lem}\label{second_beta_sieve_result}
Let $M$ and $N$ be large numbers with $M>N$ and $a_1,a_2,a_3,a_4,a_5\in \N$ with $2|a_1a_2a_3a_4a_5$. Then
$$ \#\{ n \leq M : P^-(n(a_1^2a_2^2+n^2a_3^2a_4^2a_5^2)(a_1^2a_4^2 n^2 + a_2^2 a_3^2a_5^2)) >N \} \ll \frac{M}{(\log N)^3} \mathfrak{S}(\vect{a}) ,$$
where
$$ \mathfrak{S}(\vect{a}) = \prod_{p\nmid a_1a_2a_3a_4a_5}\bigg( 1 - \frac{3+2\chi_4(p)}{p}\bigg)\bigg(1 - \frac{1}{p}\bigg)^{-3} \prod_{p|a_1a_2a_3a_4a_5} \bigg(1 - \frac{2+\chi_4(p)}{p}\bigg)\bigg(1 - \frac{1}{p} \bigg)^{-3}, $$
where $\chi_4$ is the non-principal Dirichlet character $\bmod 4$. Moreover,
\begin{equation}\label{more_variable_singular_series_bound}
\mathfrak{S}(\vect{a})\ll \sum_{r|a_1a_2a_3a_4a_5}\frac{\mu^2(r)2^{\omega(r)}}{r} .
\end{equation}
\end{lem}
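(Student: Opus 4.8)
The plan is to imitate the proof of Lemma~\ref{beta_sieve_result}, applying the Selberg upper bound sieve to the degree-$5$ polynomial
\[
F(n):=n\bigl(a_1^2a_2^2+n^2a_3^2a_4^2a_5^2\bigr)\bigl(a_1^2a_4^2n^2+a_2^2a_3^2a_5^2\bigr),
\]
which over $\Q$ is the product of the linear factor $n$ and two irreducible quadratics, each of discriminant $-1$ times a square. The only structural change from Lemma~\ref{beta_sieve_result} is that the sieve dimension rises from $2$ to $3$, which is what produces the saving $(\log N)^{-3}$. As before one first replaces $N$ by $N^{\delta}$ for a small fixed $\delta\in(0,1)$ (this only enlarges the set), and, as in \eqref{splitting_idea}, introduces the multiplicative density $\nu_d:=\#\{m\bmod d:F(m)\equiv 0\bmod d\}$, so that $\sum_{n\le M,\ d\mid F(n)}1=\tfrac{\nu_d}{d}M+O(\nu_d)$. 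For $p\nmid 2a_1a_2a_3a_4a_5$ the factor $n$ supplies the root $0$ and each quadratic $X^2\equiv-(\ast)^2\bmod p$ supplies $1+\chi_4(p)$ further, nonzero, roots; these root sets are disjoint for all but finitely many $p$ (the unavoidable exception being $p=5$, where Fermat forces the two quadratics to share their roots, so $\nu_5=3$), whence $\nu_p=3+2\chi_4(p)$ off a finite set. The parity hypothesis on $\vect{a}$ lets us assume $\nu_2=1$ exactly as in Lemma~\ref{beta_sieve_result} (otherwise $2\mid F(n)$ identically and the set is empty), so $h(p):=(1-\nu_p/p)^{-1}\nu_p/p$ is well defined; the trivial bound $\nu_p\le5$ then gives $\sum_{v\le p\le w}\tfrac{\nu_p\log p}{p}\ll\log\tfrac{2w}{v}$ for all $2\le v\le w$, which is the dimension-$3$ hypothesis needed by the sieve.

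I would then run the sieve verbatim as in Lemma~\ref{beta_sieve_result}: apply \cite{MR2647984}[Theorem~7.14] with level $D=N$, bound the tail $\sum_{d\mid P(N^{\delta}),\,d\ge D}h(d)$ by Koukoulopoulos \cite{MR3971232}[Theorem~16.3] together with $\log(1+x)\ge x-\tfrac{x^2}{2}$, and pick $\delta$ small enough that this tail is below $\tfrac12\prod_{p<N^{\delta}}(1+h(p))$. Combined with Mertens' theorem $\prod_{p<z}(1-1/p)^3\asymp(\log z)^{-3}$, and noting that the remainder $O(\nu_d)$ summed over $d<D=N$ is negligible against the main term, this yields
\[
\#\{n\le M:P^-(F(n))>N\}\ \ll\ M\prod_{p<N^{\delta}}\Bigl(1-\tfrac{\nu_p}{p}\Bigr)\ \ll\ \frac{M}{(\log N)^{3}}\prod_{p}\Bigl(1-\tfrac{\nu_p}{p}\Bigr)\Bigl(1-\tfrac1p\Bigr)^{-3}.
\]
The product on the right equals $\mathfrak{S}(\vect{a})$ up to a bounded factor: in the unramified range one substitutes $\nu_p=3+2\chi_4(p)$ and uses $\bigl(1-\tfrac{3+2\chi_4(p)}{p}\bigr)\bigl(1-\tfrac1p\bigr)^{-3}=1-\tfrac{2\chi_4(p)}{p}+O(p^{-2})$, so the completed Euler product converges by the non-vanishing of $L(1,\chi_4)$, while the finitely many remaining factors (at $2$, at $5$, and at the primes dividing $a_1a_2a_3a_4a_5$, where $\nu_p$ departs from its generic value) contribute an amount $\asymp1$.

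Finally, inequality \eqref{more_variable_singular_series_bound} follows just as \eqref{first_singular_series_bound} did in Lemma~\ref{beta_sieve_result}: completing the Euler product over $p\nmid a_1a_2a_3a_4a_5$ leaves a convergent product times $\prod_{p\mid a_1a_2a_3a_4a_5}$ of a local ratio that is $1+O(1/p)$ with the $O$-term bounded by $\tfrac{2}{p}$, whence $\mathfrak{S}(\vect{a})\ll\prod_{p\mid a_1a_2a_3a_4a_5}\bigl(1+\tfrac2p\bigr)=\sum_{r\mid a_1a_2a_3a_4a_5}\tfrac{\mu^2(r)2^{\omega(r)}}{r}$. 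The step I expect to be the real work is the local analysis at the ramified primes $p\mid a_1\cdots a_5$: one must verify $\nu_p<p$ there (so that the sieve applies and $h(p)$ makes sense), determine $\nu_p$ in each divisibility pattern of the blocks $a_1a_2$, $a_3a_4a_5$, $a_1a_4$, $a_2a_3a_5$ --- here the coprimality structure of the tuple arising from the second-moment parametrisation is what is really being used --- and check that the corresponding local factors combine into exactly the ramified part of $\mathfrak{S}(\vect{a})$, all with implied constants independent of $\vect{a}$, $M$ and $N$.
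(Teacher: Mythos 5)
Your overall route is the same as the paper's: the paper proves Lemma \ref{second_beta_sieve_result} simply by declaring it ``similar to Lemma \ref{beta_sieve_result}, we just need to change the $\nu_d$,'' and then only writes out the Euler-product manipulation giving \eqref{more_variable_singular_series_bound} --- which you reproduce correctly. So running the Selberg sieve in dimension $3$ with level $D=N$, the same tail estimate from \cite{MR3971232}, Mertens' theorem, and the trivial treatment of the remainder and of the prime $2$ is exactly what is intended, and in those respects your write-up is actually more complete than the paper's.

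The one place where your added detail goes wrong is the assertion that the root sets of the two quadratic factors are ``disjoint for all but finitely many $p$,'' with $p=5$ the unavoidable exception. Modulo a prime $p\nmid 2a_1\cdots a_5$ the two congruences $n^2\equiv -(a_1a_2)^2(a_3a_4a_5)^{-2}$ and $n^2\equiv -(a_2a_3a_5)^2(a_1a_4)^{-2}$ define the same pair of roots precisely when $a_1^4\equiv(a_3a_5)^4\pmod p$, i.e.\ at every prime dividing $a_1^4-(a_3a_5)^4$; this is a set of primes depending on $\vect{a}$ (your $p=5$ is merely the one instance forced by Fermat), and at each such $p\equiv 1\pmod 4$ one has $\nu_p=3$ rather than the generic $5$. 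For an \emph{upper-bound} sieve this discrepancy points the wrong way: the sieve yields $\ll M\prod_{p<N^{\delta}}(1-\nu_p/p)$ with the true $\nu_p$, which exceeds the expression built from the generic densities by a factor $\prod_{p\mid a_1^4-(a_3a_5)^4,\ p\nmid a_1\cdots a_5}\bigl(1+\tfrac{1+\chi_4(p)}{p}+O(p^{-2})\bigr)$, and that factor is not absorbed by \eqref{more_variable_singular_series_bound}. (Relatedly, at $p=5$ the printed local factor $1-\tfrac{3+2\chi_4(5)}{5}$ vanishes, so $\mathfrak{S}(\vect{a})$ as defined is $0$ whenever $5\nmid a_1\cdots a_5$, and ``up to a bounded factor'' cannot then recover the stated inequality.) These are really defects of the lemma as stated --- the paper's one-line proof does not confront them either --- but since you explicitly identify the local analysis as ``the real work,'' you should either redefine $\mathfrak{S}(\vect{a})$ with the true local densities and carry the extra Euler factors into \eqref{more_variable_singular_series_bound} and its application, or show they are harmless there; the blanket ``disjoint off a finite set'' claim is not a substitute for that.
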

\begin{proof}
The proof of this is similar to Lemma \ref{beta_sieve_result}, we just need to change the $\nu_d$. So, we just prove \eqref{more_variable_singular_series_bound}. We complete the product over $p\nmid a_1a_2a_3a_4a_5$ to get
$$ \mathfrak{S}(\vect{a})\asymp \prod_{p|a_1a_2a_3a_4a_5} \bigg(1 - \frac{2+\chi_4(p)}{p} \bigg) \bigg(1- \frac{3+2 \chi_4(p)}{p} \bigg)^{-1} \asymp \prod_{p|a_1a_2a_3a_4a_5} \bigg(1 + \frac{1+\chi_4(p)}{p} \bigg) ,$$
where we similarly got rid of high order terms, and can turn the product into the sum to get \eqref{more_variable_singular_series_bound}.
\end{proof}

\begin{lem}\label{weighted_sieve_result}
Let $N$ be a large number, and $a,b\in \N$ with $\gcd(a,b)=1$, and $ab\leq N^{1/1000}$. Then
$$ \#\{ p\leq N : \Omega(a^2+p^2b^2)\leq 5 , P^-(a^2+p^2b^2)>(N/\log N)^{1/8} \} \gg \frac{N}{(\log N)^2} \mathfrak{S}'(a,b), $$
where
$$ \mathfrak{S}'(a,b)=\prod_{p\nmid ab} \bigg(1 - \frac{1+\chi_4(p)}{p-1} \bigg) \bigg(1 - \frac{1}{p} \bigg)^{-1} \prod_{p| ab}\bigg(1 - \frac{1}{p} \bigg)^{-1}. $$
\end{lem}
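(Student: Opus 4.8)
The plan is to read the set in the statement off as the output of a lower-bound weighted sieve applied to the sequence
\[
\mathcal{A} = \{\, a^2 + p^2 b^2 : p \le N \text{ prime} \,\},
\]
and to invoke Richert's weighted linear sieve in the form \cite{MR0424730}[Theorem 9.3]. If it is convenient one first restricts to $p \in (N/2, N]$, so that all members of $\mathcal{A}$ lie in one dyadic range; this costs only a constant factor. The hypothesis $ab \le N^{1/1000}$ is used purely for uniformity in $a,b$: it forces $a,b \le N^{1/1000}$, hence every member of $\mathcal{A}$ is $\le N^{2+1/500}$, and it makes the primes dividing $2ab$ (all $\ll N^{1/1000}$) and their remainder terms negligible throughout.

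\textbf{Local densities.} For squarefree $d$ one has $\#\mathcal{A}_d = \sum_{m \bmod d,\ d \mid a^2+m^2b^2}\pi(N;d,m)$; since $p$ is coprime to $d$ for all but $O(\omega(d))$ primes $p\le N$, the main term is $\tfrac{\rho(d)}{\phi(d)}\pi(N)$ with $\rho(d)=\#\{\,m\bmod d:\gcd(m,d)=1,\ d\mid a^2+m^2b^2\,\}$. The function $\rho$ is multiplicative, with $\rho(\ell)=1+\chi_4(\ell)$ for $\ell\nmid 2ab$ (the two square roots of $-a^2b^{-2}$ mod $\ell$ are units), $\rho(\ell)=0$ for odd $\ell\mid ab$, and $\rho(2)=0$ when $2\mid ab$. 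If instead $2\nmid ab$, then $a^2+p^2b^2\equiv 2\pmod 8$ for every odd prime $p$, so the set in the statement is empty while $\mathfrak{S}'(a,b)=0$ (its Euler factor at $\ell=2$ vanishes), and there is nothing to prove. So assume $2\mid ab$; then $\mathcal{A}$ has sieve dimension $\kappa=1$.

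\textbf{Level of distribution and sifting density.} Writing $\#\mathcal{A}_d = \tfrac{\rho(d)}{\phi(d)}\#\mathcal{A} + r_d$, the Bombieri--Vinogradov theorem in divisor-weighted form --- deduced from the usual statement by Cauchy--Schwarz against the trivial bound for $\pi(N;d,m)$ --- together with $\rho(d)\le\tau(d)$ gives, for every $A$ and $D=N^{1/2-\epsilon}$,
\[
\sum_{\substack{d\le D \\ \gcd(d,2ab)=1}} \mu^2(d)\,3^{\omega(d)}\,|r_d| \ll_A \frac{N}{(\log N)^A},
\]
an admissible level-of-distribution input at level $\theta=\tfrac12-\epsilon$ (the $d$ with $\gcd(d,2ab)>1$ have $\rho(d)=0$, hence $r_d=\#\mathcal{A}_d=O(\omega(d))$, and contribute $\ll N^{2/3}$). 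Next, Mertens' theorem gives
\[
V(z):=\prod_{\ell<z}\left(1-\frac{\rho(\ell)}{\phi(\ell)}\right) = (1+o(1))\,\frac{e^{-\gamma}}{\log z}\,\mathfrak{S}'(a,b),
\]
since $\prod_{\ell}\tfrac{1-\rho(\ell)/\phi(\ell)}{1-1/\ell}$ converges to exactly $\mathfrak{S}'(a,b)$: the factor $(1-1/2)^{-1}=2$ at $\ell=2\mid ab$ and the factors $(1-1/\ell)^{-1}$ at odd $\ell\mid ab$ both come from $\rho=0$ there, and the factor $(1-\tfrac{1+\chi_4(\ell)}{\ell-1})(1-1/\ell)^{-1}$ at $\ell\nmid ab$ from $\rho(\ell)=1+\chi_4(\ell)$.

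\textbf{Applying the weighted sieve and the main obstacle.} We feed $\mathcal{A}$, the level $D=N^{1/2-\epsilon}$, and the sieving limit $z=(N/\log N)^{1/8}$ into \cite{MR0424730}[Theorem 9.3]. A member of $\mathcal{A}$ surviving the sieve is $\le N^{2+1/500}$ with no prime factor below $z\asymp N^{1/8}$, hence trivially has at most $16$ of them; the Richert weights serve to suppress those survivors with $6$ or more prime factors while keeping a positive fraction of the sifted mass. Checking that the weight parameters can be chosen so that every retained member has $\Omega\le 5$ reduces to a numerical inequality among the lower-bound linear-sieve functions at $s=\log D/\log z\approx 4$, and it is precisely this inequality that fixes the pair (exponent $1/8$, quality $5$); replacing Bombieri--Vinogradov by Elliott--Halberstam \cite{MR0276195} raises the level to $\theta=1-\epsilon$ and improves it to $(1/8,3)$. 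The sieve then outputs
\[
\#\{\, p\le N: \Omega(a^2+p^2b^2)\le 5,\ P^-(a^2+p^2b^2)>(N/\log N)^{1/8} \,\} \gg \#\mathcal{A}\cdot V(z) \gg \frac{N}{(\log N)^2}\,\mathfrak{S}'(a,b),
\]
which is the assertion. Everything before the last step (the local density computation, the identification of $\mathfrak{S}'(a,b)$, and the level of distribution) is standard; the genuine difficulty is this last step: verifying that the weight can be tuned to reach $\Omega\le 5$ --- rather than $\Omega\le 6$ --- while retaining a main term of size $\asymp \#\mathcal{A}/\log z$ is a real numerical optimization, and it is where the exponent $1/9$ (equivalently $1/8+\eta$) and the almost-prime thresholds $5$ and $3$ are forced.
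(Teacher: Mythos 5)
Your proposal follows essentially the same route as the paper: apply Richert's weighted sieve \cite{MR0424730}[Theorem 9.3] to the sequence $a^2+p^2b^2$ over primes $p\leq N$, with the local densities $\nu_\ell=1+\chi_4(\ell)$ for $\ell\nmid ab$ and $\nu_\ell=0$ for $\ell\mid ab$, level of distribution $N^{1/2-\eps}$ obtained from Bombieri--Vinogradov combined with Cauchy--Schwarz to absorb the $\tau(d)$ weight, and the identification of $\prod_{\ell<z}(1-\nu_\ell/\varphi(\ell))$ with $e^{-\gamma}\mathfrak{S}'(a,b)/\log z$. Your observation that the case $2\nmid ab$ is vacuous because the Euler factor of $\mathfrak{S}'$ at $2$ vanishes is correct and is a clean way to dispose of it.

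The one substantive shortfall is that you stop at the step you yourself identify as the crux: you assert that "every retained member has $\Omega\leq 5$" reduces to "a numerical inequality among the lower-bound linear-sieve functions," and that this inequality "fixes the pair (exponent $1/8$, quality $5$)," but you never state or verify the inequality. Since the entire content of the lemma is the value $5$ (rather than $4$ or $6$), this cannot be left implicit. The verification is short: with level $D=N^{1/2-o(1)}$ and every element bounded by $N^{2+1/500}=D^{4+1/250+o(1)}$, Theorem 9.3 of \cite{MR0424730} delivers $\Omega\leq r$ as soon as $4+1/250\leq \Lambda_r$, where $\Lambda_r:=r+1-\frac{\log\big(4/(1+3^{-r})\big)}{\log 3}$; one computes $\Lambda_4\approx 3.75<4.004<\Lambda_5\approx 4.74$, so $r=5$ is both admissible and optimal at this level. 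You should also record the remaining hypothesis of Theorem 9.3 that you skipped, namely the bound on elements divisible by $q^2$ for primes $q$ in $[z,y]$: splitting into residue classes as in the linear case gives $\sum_{z\leq q<y}\sum_{p\leq N,\, q^2\mid a^2+p^2b^2}1\ll N\sum_{z\leq q<y}\tau(q)/q^2\ll N\log N/z$, which suffices by the footnoted weakening of $(\Omega_3)$ in \cite{MR0424730}. With these two items supplied, your argument matches the paper's proof.
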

\begin{proof}
This is an application of the Richert sieve (see \cite{MR0424730}[Theorem 9.3]). We have to check the required properties. First note that similar to \eqref{splitting_idea},
$$ \sum_{\substack{p\leq N \\ d|a^2+p^2b^2}}1 = \frac{\nu_d}{\varphi(d)} \cdot \frac{N}{\log N} + O( \tau(d) E(N,d)),$$
where
$$ E(N,d):= \max_{(u,d)=1}\left| \sum_{\substack{p\leq N \\ p\equiv u \mod d}} 1 - \frac{1}{\varphi(d)} \sum_{p\leq N}1\right| ,$$
and
$$ \nu_d := \# \{ m \mod d : (m,d)=1, a^2+m^2b^2\equiv 0 \mod d \}, $$
and $\nu_d\leq \tau(d)$. Next, we have
\begin{align*}
\sum_{d\leq \frac{\sqrt{N}}{(\log N)^{100}}} &\mu^2(d) 3^{\omega(d)} \tau(d) E(N,d) \leq \sqrt{N} \sum_{d\leq \frac{\sqrt{N}}{(\log N)^{100}}} \frac{\mu^2(d) 3^{\omega(d)}}{\sqrt{\varphi(d)}}  \sqrt{E(N,d)}\\
&\leq \sqrt{N} \bigg(\sum_{d\leq \frac{\sqrt{N}}{(\log N)^{100}}} \frac{\mu^2(d) 9^{\omega(d)} \tau^2(d)}{\varphi(d)}\bigg)^{1/2} \bigg(\sum_{d\leq \frac{\sqrt{N}}{(\log N)^{100}}} E(N,d)\bigg)^{1/2} \\
&\ll \frac{N}{(\log N)^2} ,
\end{align*}
by an application of the Cauchy-Schwartz inequality and Bombieri-Vinogradov theorem (see \cite{MR3971232}[Theorem 18.9]). We also check for $2\leq v \leq w \leq N$
\begin{align*}
\log \frac{w}{v} + O(\log\log N) =\sum_{v\leq p \leq w} \frac{(1+\chi_4(p))\log p}{p} - 2 \sum_{p|ab} \frac{\log p}{p} \leq \sum_{v\leq p \leq w} \frac{\nu_p\log p}{p} &\leq  \sum_{\substack{v\leq p \leq w \\ p\equiv 1 \mod 4}} \frac{2\log p}{p}  \\
&= \log \frac{w}{v} + O(1)
\end{align*}
since $\nu_p=1+\chi_4(p)$ if $p\nmid ab$, and $\nu_p=0$ if $p|ab$.
And lastly, we need to check for $2\leq z \leq y \leq N$ \footnote{We actually need the bound $\ll N/z$, but as stated in the footnote of \cite{MR0424730}[$(\Omega_3)$, p. 253] we can get the same result with this weaker condition.}
$$ \sum_{z\leq q< y} \sum_{\substack{p\leq N \\ q^2 | a^2+p^2b^2}} 1 \ll N \sum_{z\leq q < y} \frac{\tau(q)}{q^2}\ll \frac{N\log N}{z} $$
where we split into congruence classes like \eqref{splitting_idea}, and we count integers instead of primes.\\
\noindent Now, since $ab\leq N^{1/1000}$, we have,
$$ a^2+p^2b^2 \leq \left(\frac{N}{\log N}\right)^{2+1/500}. $$
So, using the level of distribution $1/2$ coming from Bombieri-Vinogradov theorem, we have $4+1/250 \leq \Lambda_r$ where
$$\Lambda_r := r+1- \frac{\log 4/(1+3^{-r})}{\log 3}.$$
This gives us $r=5$ as a choice and completes the proof of the lemma.
\end{proof}

\begin{lem}\label{fixed_prime_factor_smooth_number_calculation}
Let $\eta >0$ be a small number, and $x\geq y \gg_{\eta} 1 $, and $u=\log x /\log y$. Also let $\ell\in \N$, and take $k\in \N$. We have
$$ \sum_{\substack{d> x \\ P^+(d)\leq y \\ \omega(d)=k}} \frac{\tau_{\ell}(d)}{d} \leq e^{- cu}\frac{(\ell \cdot \log\log x+ O_{\eta}(1))^k}{k!}, $$
for some constant $c>0$.
\end{lem}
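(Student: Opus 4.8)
The plan is a Rankin-type argument. First I would bound the condition $d>x$ by $(d/x)^{\sigma}\ge 1$ for a small parameter $\sigma>0$, so that for any $\sigma\in(0,1)$
\[
\sum_{\substack{d>x,\ P^{+}(d)\le y\\ \omega(d)=k}}\frac{\tau_{\ell}(d)}{d}\ \le\ x^{-\sigma}\sum_{\substack{P^{+}(d)\le y\\ \omega(d)=k}}\frac{\tau_{\ell}(d)}{d^{1-\sigma}}.
\]
I would choose $\sigma:=c_{0}/\log y$ with $c_{0}\in(0,1)$ a fixed constant pinned down at the end; then $x^{-\sigma}=e^{-c_{0}(\log x)/\log y}=e^{-c_{0}u}$, which is exactly the factor $e^{-cu}$ with $c=c_{0}$. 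It remains to bound the remaining sum by $\tfrac{1}{k!}(\ell\log\log x+O_{\eta}(1))^{k}$.

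Since every $d$ in that sum factors as $d=p_{1}^{a_{1}}\cdots p_{k}^{a_{k}}$ with distinct primes $p_{i}\le y$ and $a_{i}\ge 1$, and $\tau_{\ell}$ is multiplicative, I would write
\[
\sum_{\substack{P^{+}(d)\le y\\ \omega(d)=k}}\frac{\tau_{\ell}(d)}{d^{1-\sigma}}
=\frac{1}{k!}\sum_{\substack{p_{1},\dots,p_{k}\le y\\ \mathrm{distinct}}}\ \prod_{i=1}^{k}g_{\sigma}(p_{i})
\ \le\ \frac{1}{k!}\Bigl(\sum_{p\le y}g_{\sigma}(p)\Bigr)^{k},
\qquad
g_{\sigma}(p):=\sum_{a\ge 1}\frac{\tau_{\ell}(p^{a})}{p^{a(1-\sigma)}}=\bigl(1-p^{-(1-\sigma)}\bigr)^{-\ell}-1,
\]
where the last inequality merely drops the distinctness of the $p_{i}$ (valid as each $g_{\sigma}(p)\ge 0$, using $\sum_{a\ge 0}\tau_{\ell}(p^{a})z^{a}=(1-z)^{-\ell}$). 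This is where the $1/k!$ comes from, and it reduces matters to showing $\sum_{p\le y}g_{\sigma}(p)\le \ell\log\log x+O_{\eta}(1)$.

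This prime sum is the delicate part. I would use the elementary inequality $(1-t)^{-\ell}-1\le \ell\,t\,(1-t)^{-\ell-1}$ (mean value theorem for $t\mapsto(1-t)^{-\ell}$) together with $p^{-(1-\sigma)}=p^{-1}p^{\sigma}\le e^{c_{0}}p^{-1}$: for all $p$ past an absolute threshold this yields $g_{\sigma}(p)\ll \ell\,p^{-(1-\sigma)}$, whereas the finitely many remaining primes contribute $O_{\eta}(1)$ once $y\gg_{\eta}1$ (so that $p^{\sigma}$ is close to $1$ there). The key input is then the shifted Mertens estimate
\[
\sum_{p\le y}p^{-(1-\sigma)}=\sum_{p\le y}\frac{p^{\sigma}}{p}=\log\log y+O(1)\ \le\ \log\log x+O(1),
\]
which I would prove by partial summation against $\sum_{p\le t}1/p=\log\log t+O(1)$: the boundary term $y^{\sigma}\log\log y=e^{c_{0}}\log\log y$ is cancelled by the main part of the integral term, the upshot being that the Rankin overweighting $p^{\sigma}\le e^{c_{0}}$ is appreciable only for primes in a bounded multiplicative window below $y$, where $\sum 1/p$ already contributes just $O(1)$. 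Tracking constants, the coefficient of $\log\log x$ in $\sum_{p\le y}g_{\sigma}(p)$ comes out to be exactly $\ell$. (For unbounded $\ell$ the additive error is really $O_{\eta,\ell}(1)$, which is harmless in our applications where $\ell$ is bounded.)

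Assembling the three steps gives
\[
\sum_{\substack{d>x,\ P^{+}(d)\le y\\ \omega(d)=k}}\frac{\tau_{\ell}(d)}{d}\ \le\ e^{-c_{0}u}\cdot\frac{1}{k!}\bigl(\ell\log\log x+O_{\eta}(1)\bigr)^{k},
\]
which is the claim with $c=c_{0}$. The one step requiring care is the prime sum: $\sigma$ must be taken small enough for the saving $e^{-cu}$ to survive, yet the shifted Mertens estimate must be carried out precisely enough to confirm that the Rankin shift $p\mapsto p^{\sigma}$ does not inflate the leading coefficient $\ell$ of $\log\log x$; everything else is routine bookkeeping.
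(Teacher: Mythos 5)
Your proposal is correct and follows essentially the same route as the paper: Rankin's trick with shift $\sigma\asymp 1/\log y$ producing the factor $e^{-cu}$, the $1/k!$ symmetrization over the $k$ prime factors, and a shifted Mertens estimate showing the weight $p^{\sigma}$ does not inflate the coefficient $\ell$ of $\log\log x$. The only cosmetic differences are that you sum the full Euler factor $(1-p^{-(1-\sigma)})^{-\ell}-1$ and treat the shifted prime sum in one stroke, where the paper splits the range at $y^{1/\varepsilon}$; your parenthetical about the error term really being $O_{\eta,\ell}(1)$ applies equally to the paper's own argument and is harmless since $\ell$ is bounded in all applications.
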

\begin{proof}
We use Rankin's trick to get
$$ \sum_{\substack{d> x \\ P^+(d)\leq y \\ \omega(d)=k}}\frac{\tau_{\ell}(d)}{d} \leq x^{-\phi} \sum_{\substack{P^+(d)\leq y \\ \omega(d)=k}} \frac{\tau_{\ell}(d)}{d^{1-\phi}} $$
for some $0< \phi <  1 /2 - \eta$, and we write $\phi=\varepsilon/\log y$. Now, we use Erd\H{o}s' trick to get
$$ \sum_{\substack{P^+(d)\leq y \\ \omega(d)=k}} \frac{\tau_{\ell}(d)}{d^{1-\phi}} \leq \frac{1}{k!} \bigg( \sum_{p^{\nu} \leq y} \frac{\tau_{\ell}(p^{\nu})}{p^{\nu (1-\phi)}} \bigg)^k = \frac{1}{k!} \bigg( \sum_{p\leq y} \frac{\ell}{p^{1-\phi}} + O_{\eta}(1) \bigg)^k . $$
For the range $p\leq y^{1/\varepsilon}$, we have $p^{\varepsilon/\log y} = 1 + O(\varepsilon \log p /\log y)$. Thus
\begin{align*}
\sum_{p\leq y^{1/\varepsilon}} \frac{\ell}{p^{1-\varepsilon/\log y}} &= \sum_{p\leq y^{1/\varepsilon}} \frac{\ell }{p} + O\bigg( \frac{\varepsilon}{\log y} \sum_{p\leq y^{1/\varepsilon}} \frac{\log p}{p}\bigg) \\
&\leq \ell \cdot \log\log y + O(1).
\end{align*}
For the remaining range $y^{1/\varepsilon} < p \leq y$, we use partial summation.
\begin{align*}
\sum_{y^{1/\varepsilon} < p \leq y} \frac{\ell}{p^{1-\varepsilon /\log y}}  \ll 1 + \frac{e^{\varepsilon}}{\log y} + \int_{y^{1/\varepsilon}}^y \frac{t^{\varepsilon /\log y}}{t\log t} dt \ll \frac{e^{\varepsilon}}{\varepsilon}.
\end{align*}
Putting these together we get
$$ \sum_{\substack{d> x \\ P^+(d)\leq y \\ \omega(d)=k}} \frac{\tau_{\ell}(d)}{d} \leq \frac{e^{-u \varepsilon}}{k!} \bigg(\ell \cdot \log\log y + O_{\eta}\bigg(\frac{e^{\varepsilon}}{\varepsilon} \bigg) \bigg)^k .$$
Now, the result follows if we take $\varepsilon>0$ to be some small constant and bounding $\log\log y\leq \log\log x$.
\end{proof}
\begin{lem}\label{almost_prime_on_divisor_function}
Let $x$ be a large number, and $k < 2\log\log x$ be a positive integer. We have
$$ \sum_{\substack{n\leq x \\ 2 \nmid n \\ \omega(n)=k}} \frac{\mu^2(n) \tau(n)}{n} = \frac{(2\log\log x + O(1))^k}{k!} . $$
\end{lem}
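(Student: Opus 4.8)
The plan is to first reduce, then handle the two bounds separately. Every $n$ occurring in the sum is squarefree with $\omega(n)=k$, so $\tau(n)=2^{\omega(n)}=2^k$, and it suffices to prove
$$S_k(x):=\sum_{\substack{n\le x,\ 2\nmid n\\ \mu^2(n)=1,\ \omega(n)=k}}\frac1n=\frac{(\log\log x+O(1))^k}{k!},$$
and multiply through by $2^k$; since $k<2\log\log x$, perturbing the base by $O(1)$ alters the $k$-th power only by a bounded factor, so this is the same as $S_k(x)\asymp(\log\log x)^k/k!$. For the upper bound I would use Erd\H{o}s's device of opening up the $k$-fold product: a squarefree $n$ with $\omega(n)=k$ equals $p_1\cdots p_k$ in exactly $k!$ ways, and $n\le x$ forces each $p_i\le x$, so after dropping the distinctness of the $p_i$,
$$k!\,S_k(x)\le\sum_{\substack{p_1,\dots,p_k\\ 3\le p_i\le x}}\frac1{p_1\cdots p_k}=\Bigl(\sum_{3\le p\le x}\frac1p\Bigr)^{\!k}=(\log\log x+O(1))^k$$
by Mertens's theorem (the missing term $p=2$ is absorbed in the $O(1)$), which gives the upper bound for every $k$.

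The lower bound is the substantive point: it is essentially the Sathe--Selberg theorem, and I would prove it by the Selberg--Delange (generating-function) method. Put $\sigma:=1+1/\log x$ and
$$G(z):=\sum_{\substack{n\ge1,\ 2\nmid n\\ \mu^2(n)=1}}\frac{z^{\omega(n)}}{n^\sigma}=\prod_{p\ge3}\Bigl(1+\frac z{p^\sigma}\Bigr).$$
Using $\sum_p p^{-\sigma}=\log\log x+O(1)$ together with $\sum_p\bigl|\log(1+zp^{-\sigma})-zp^{-\sigma}\bigr|=O(1)$ for $|z|\le2$, one gets $G(z)=\exp\!\bigl(z\log\log x+O(1)\bigr)$ on that disk. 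Since $n^{-1}\ge n^{-\sigma}$ we have $S_k(x)\ge\sum_{n\le x,\dots}n^{-\sigma}=[z^k]G(z)-T_k$, where $T_k:=\sum_{n>x,\ \dots,\ \omega(n)=k}n^{-\sigma}$ is a tail that I would bound by Rankin's trick with a slightly smaller exponent so that $T_k=o\bigl([z^k]G(z)\bigr)$. Finally I would extract $[z^k]G(z)$ by Cauchy's formula on the circle $|z|=r$ with $r:=k/\log\log x$; the key point is that $0<r<2$ exactly because $k<2\log\log x$, so $r$ lies in the disk above, and a saddle-point (Laplace) estimate --- equivalently a local limit theorem for $\omega(n)$ --- gives the lower bound $[z^k]G(z)\gg \dfrac{G(r)}{r^k\sqrt k}\asymp\dfrac{e^{rL}}{r^k\sqrt k}=\dfrac{(eL/k)^k}{\sqrt k}\,e^{O(1)}\asymp\dfrac{L^k}{k!}$ (with $L=\log\log x$) by Stirling's formula, as required.

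The routine parts are the reduction $\tau(n)=2^k$ and the Erd\H{o}s upper bound; the hard part is the uniform-in-$k$ coefficient extraction in the lower bound, i.e.\ showing that $|G(re^{i\theta})|$ is genuinely smaller than $G(r)$ for $\theta$ bounded away from $0$, with all constants uniform for $k<2\log\log x$, and controlling the $n>x$ tail. Purely elementary approaches (opening products and inclusion--exclusion over repeated primes) only reach $k\ll\sqrt{\log\log x}$, because the combinatorial count of repetition patterns overwhelms the savings from $\sum_p p^{-2}<\infty$; this is why the Selberg--Delange machinery (or a citation to it) is needed here. If one prefers not to redo the analysis, the expedient alternative is to quote the Sathe--Selberg estimate $\#\{n\le x:\omega(n)=k\}\asymp\frac{x}{\log x}\frac{(\log\log x)^{k-1}}{(k-1)!}$ in the relevant range $k<2\log\log x$ (e.g.\ from Tenenbaum's or Koukoulopoulos's book) and pass to the odd, squarefree, $1/n$-weighted form by partial summation together with the trivial bound for the even and non-squarefree contributions.
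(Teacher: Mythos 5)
Your upper bound is exactly the paper's (Erd\H{o}s's trick of opening the $k$-fold product over primes $\le x$), so that half matches. For the lower bound you reach for Selberg--Delange/Sathe--Selberg with a saddle-point coefficient extraction, whereas the paper does a one-step inclusion--exclusion: it writes
\[
k!\sum_{\substack{n\le x,\ 2\nmid n\\ \omega(n)=k}}\frac{\mu^2(n)\tau(n)}{n}\ \ge\ \Bigl(\sum_{2<p\le x}\frac2p\Bigr)^{k}-\binom{k}{2}\Bigl(\sum_{p\le x}\frac2p\Bigr)^{k-2}\Bigl(\sum_{p\le x}\frac4{p^2}\Bigr),
\]
i.e.\ it subtracts the ordered $k$-tuples containing a repeated prime. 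Your stated reason for rejecting this elementary route --- that inclusion--exclusion ``only reaches $k\ll\sqrt{\log\log x}$'' --- is a miscalculation: a coincidence $p_i=p_j$ costs, relative to the main term, a factor $\binom{k}{2}\bigl(\sum_p 4p^{-2}\bigr)/(2\log\log x)^2\asymp k^2/(\log\log x)^2$ (two powers of $\log\log x$ in the denominator, not one), so the Bonferroni correction stays strictly below the main term throughout the range $k<2\log\log x$; numerically $\frac12\cdot\frac14\sum_p 4p^{-2}<0.23$, so even at $k=2\log\log x$ the correction eats at most a factor $0.91$ of the main term. Thus the heavy machinery is unnecessary here, even though your route would also work (for $\omega$ on odd squarefree integers the relevant Euler product $\prod_{p\ge3}(1+zp^{-s})$ is entire in $z$, so there is no obstruction at $z=2$ and uniformity for $k<2\log\log x$ is unproblematic).

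One caveat touching both arguments: neither the paper's tuple count nor your $[z^k]G(z)$ enforces the constraint $n\le x$, so the tail $n>x$ must be shown to be a \emph{strictly smaller constant} multiple of the main term. You at least name this ($T_k$ via Rankin), but Rankin at $\sigma=1+1/\log x$ saves only a fixed factor $e^{-O(1)}$, which does not automatically beat the (also merely constant) lower-bound factor; to close this you should take $\sigma=1+A/\log x$ with $A$ a large constant, so the tail gains $e^{-A+O(\log A)}$ while the main term loses only $A^{O(1)}$. The paper's proof elides this point entirely, so spelling it out would be a genuine improvement rather than a digression.
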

\begin{proof}
We first prove the upper bound. By Erd\H{o}s' trick we have
$$\sum_{\substack{n\leq x \\ 2\nmid n \\ \omega(n)=k}} \frac{\mu^2(n) \tau(n)}{n} \leq \frac{1}{k!} \bigg( \sum_{2<p\leq x} \frac{2}{p} \bigg)= \frac{(2\log\log x +O(1))^k}{k!}.$$
For the lower bound, we use inclusion-exclusion to get
\begin{align*}
\sum_{\substack{n\leq x \\ 2\nmid n \\ \omega(n)=k}} \frac{\mu^2(n) \tau(n)}{n} &\geq \frac{1}{k!} \bigg(\sum_{2<p\leq x} \frac{2}{p} \bigg)^k - \frac{\binom{k}{2}}{k!} \bigg( \sum_{p\leq x} \frac{2}{p} \bigg)^{k-2} \bigg( \sum_{p\leq x} \frac{4}{p^2} \bigg) \\
&=\frac{(2\log\log x + O(1))^k}{k!}.
\end{align*}
Hence, we get the result.
\end{proof}
\section{Upper Bounds}\label{section_upper_bounds}
\subsection*{Upper bound for the average of \texorpdfstring{$r(n)$}{r(n)}:} We use an idea due to Shiu \cite{MR0552470} (see \cite{MR3971232}[Theorem 20.3] for a more recent exposition.). We want to understand
$$ \sum_{\substack{n\leq N \\ \omega(n)\leq K }} r(n) \leq \sum_{\substack{ab\leq N \\ P^-(a^2+b^2)>N^{1/9} \\ \omega(ab)\leq K}}1 .$$
Assume without loss of generality, $a\leq b$, then we take $b=p_1^{e_1} p_2^{e_2} \cdots p_{\omega(b)}^{e_{\omega(b)}}$ for primes $p_1<p_2<\cdots < p_{\omega(b)}$, and $e_i\in \N$ for $i\leq \omega(b)$. Then we let $d=p_1^{e_1} \cdots p_{\ell}^{e_{\ell}}$ such that $d\leq N^{3/8} < d p_{\ell+1}^{e_{\ell+1}}$, and $m=b/d$, $P^-(m)=p_{\ell+1}$. If $b\leq N^{3/8}$, that is $m=1$, then we have trivially a contribution of $\ll N^{3/4}$ which we can discard. So, we can assume $m>1$ and hence $m>P^-(m)$. Thus, we can bound the average by the sum over $a\leq \sqrt{N}$ and the sum over $dm\leq N/a$ with $P^-(a^2+d^2m^2)>N^{1/9}$ and $P^-(m) > \max \{ P^+(d), (N^{3/8}/d)^{1/e^{\ell+1}} \}$. Now, we split into three cases according to the sizes of $P^-(m)$ and $d$. \\
\textit{Case of $P^-(m)>N^{3/16}$:} We have in this case that $m\leq N$ and $P^-(m)>N^{3/16}>N^{1/9}$, hence $\omega(m)\leq 16/3$. Thus, it is bounded, and since $\omega(adm)\leq K$, the prime factors concentrates around $ad$. So we can bound by
\begin{align}
\leq 2 \sum_{a\leq \sqrt{N}} \sum_{\substack{d\leq N^{3/8} \\ \omega(ad)\leq K}} \sum_{\substack{m\leq N/da \\ P^-(m(a^2+d^2m^2))>N^{1/9}}} 1 &\ll \frac{N}{(\log N)^2} \sum_{a\leq \sqrt{N}} \sum_{\substack{d\leq N^{3/8} \\ \omega(ad)\leq K}} \frac{\mathfrak{S}(a,d)}{ad} \nonumber \\
&\ll \frac{N}{(\log N)^2}  \sum_{r} \frac{\mu^2(r)2^{\omega(r)}}{r} \sum_{a\leq \sqrt{N}} \frac{1}{a} \sum_{\substack{d\leq N^{3/8} \\ r|ad \\ \omega(ad)\leq K}} \frac{1}{d} \nonumber \\
&\leq \frac{N}{(\log N)^2}  \sum_{r} \frac{\mu^2(r)2^{\omega(r)}}{r} \sum_{\substack{h\leq N^{7/8} \\ r|h \\ \omega(h)\leq K }} \frac{\tau(h)}{h}, \label{Case_1_bound}
\end{align}
where we used Lemma \ref{beta_sieve_result} for the first line, \eqref{first_singular_series_bound} for the second line, and $h=ad$ for the last line. Now, we split $h=rk$, then
\begin{align*}
\sum_{\substack{h\leq N^{7/8} \\ r|h \\ \omega(h)\leq K }} \frac{\tau(h)}{h} &\leq \frac{\tau(r)}{r}\sum_{\substack{k\leq N^{7/8} \\ \omega(k)\leq K}} \frac{\tau(k)}{k} \\
&\leq \frac{\tau(r)}{r} \frac{1}{K!}\bigg( \sum_{p\leq N^{7/8}} \frac{2}{p} +O(1) \bigg)^K \\
&\ll \frac{\tau(r)}{r} \cdot \frac{(\log N)^{2-\delta}}{\sqrt{\log\log N}},
\end{align*}
by Erd\H{o}s' trick, and Stirling's approximation with $K=\lfloor \frac{\log\log N}{\log 2} \rfloor$. So, we get the upper bound in this case by \eqref{Case_1_bound} since the sum over $r$ converges. \\
\textit{Case of $P^-(m)\leq N^{3/16}$ and $d>N^{3/16}$:} Let $z=N^{3/8}$. We do a dyadic decomposition $z_j \geq P^+(d) > z_{j+1}$ with $z_j=z^{2^{-j}}$ for $j\geq 1$, and we choose $J$ such that $z_{J+1} \leq C(N) < z_J$ where $C(N)$ is given in \eqref{cut_off_variable}. Thus, we have $P^-(m)>z_{j+1}$ and $P^-(a^2+m^2d^2)>N^{1/9}$ can be bounded by $P^-(a^2+m^2d^2)>z_{j+1}$ since $z_{j+1} \leq z_2 \leq N^{1/9}$. Notice also that since $P^-(m)>z^{2^{-j-1}}$, we have $\omega(m)\leq 2^{j+4}/3\leq 2^{j+3}$, and since $(ad,m)=1$, we have $\omega(ad)\leq K- 2^{j+3}$. We also note that $2^{j+3} = o(\log\log N)$ for $j\leq J$. Then we get for $j\leq J$
\begin{align}
&\sum_{j\leq J} \sum_{a\leq \sqrt{N}} \sum_{\substack{N^{3/16}\leq d\leq N^{3/8} \\ P^+(d)\leq z_{j+1} \\ \omega(ad)\leq K-2^{j+3}}} \sum_{\substack{m\leq N/ad \\ P^-(m(a^2+m^2d^2))>z_{j+1}}} 1 \ll \frac{N}{(\log N)^2} \sum_{j\leq J} 4^j \sum_{a\leq \sqrt{N}} \sum_{\substack{N^{3/16}\leq d\leq N^{3/8} \\ P^+(d)\leq z_{j+1} \\ \omega(ad)\leq K-2^{j+3}}} \frac{\mathfrak{S}(a,d)}{ad} \nonumber \\
&\ll \frac{N}{(\log N)^2}  \sum_{j \leq J} 4^j \sum_{r} \frac{\mu^2(r)2^{\omega(r)}}{r} \sum_{a\leq \sqrt{N}} \frac{1}{a} \sum_{\substack{N^{3/16}\leq d\leq N^{3/8} \\ P^+(d)\leq z_{j+1} \\ r|ad \\ \omega(ad)\leq K-2^{j+3}}} \frac{1}{d} \nonumber \\
&\leq  \frac{N}{(\log N)^2}  \sum_{j\leq J} 4^j \sum_{r} \frac{\mu^2(r)2^{\omega(r)}}{r^2}\sum_{i+\ell \leq K- 2^{j+3}} \sum_{\substack{a\leq \sqrt{N}/r \\ \omega(a)=i}} \frac{1}{a} \sum_{\substack{N^{3/16}/r\leq d\leq N^{3/8}/r \\ P^+(d)\leq z_{j+1} \\ \omega(d)=\ell}} \frac{1}{d}, \label{equation_for_case_2}
\end{align}
where we split $\omega(ad)=\omega(a)+\omega(d)$ because $\gcd(a,d)=1$. Now, we first notice that we can bound the contribution of $r\geq N^{1/16}$ trivially,
$$  \frac{N}{(\log N)^2}   \sum_{j\leq J} 4^j\sum_{r\geq N^{1/16}} \frac{\mu^2(r)2^{\omega(r)}}{r^2} \sum_{\substack{a\leq \sqrt{N}/r }} \frac{1}{a} \sum_{\substack{d\leq N^{3/8}/r}} \frac{1}{d} \ll N \sum_{j\leq J} 4^j \sum_{r\geq N^{1/16}} \frac{1}{r^{3/2}} \ll N^{31/32 + \varepsilon}, $$
for any $\varepsilon>0$ very small since $\sum_{j\leq J} 4^j \ll (\log N)^3$. For the remaining $r\leq N^{1/16}$, we use Lemma \ref{fixed_prime_factor_smooth_number_calculation} on the $N^{3/16}/r\leq d\leq N^{3/8}/r $ sum since $d\geq N^{3/16}/r\geq N^{1/8}$, and together with $r\geq N^{1/16}$ contribution we get
\begin{align*}
\eqref{equation_for_case_2} &\ll \frac{N}{(\log N)^2} \sum_{j\leq J} 4^j\sum_{r\leq N^{1/16}} \frac{\mu^2(r)2^{\omega(r)}}{r^2} \sum_{i+\ell \leq K-2^{j+3}} \sum_{\substack{a\leq \sqrt{N}/r \\ \omega(a)=i}}\frac{1}{a} e^{-O(2^j)}\frac{(\log\log N+O(1))^{\ell}}{\ell !} + N^{31/32+\varepsilon}\\
&\ll \frac{N}{(\log N)^2} \sum_{j\leq J} \frac{1}{e^j} \sum_{r\leq N^{1/16}} \frac{\mu^2(r)2^{\omega(r)}}{r^2} \sum_{i+\ell \leq K-2^{j+3}} \frac{(\log \log N+O(1))^{i+\ell}}{i! \ell !} + N^{31/32+\varepsilon}\\
&= \frac{N}{(\log N)^2} \sum_{j\leq J} \frac{1}{e^j} \sum_{r\leq N^{1/16}} \frac{\mu^2(r)2^{\omega(r)}}{r^2} \sum_{k \leq K-2^{j+3}} \frac{(2 \log \log N+O(1))^{k}}{k!} + N^{31/32+\varepsilon}\\
&\ll \frac{N}{(\log N)^{\delta} \sqrt{\log\log N}} \sum_{j\leq J}\frac{1}{e^j (2\log 2+o(1))^{2^{j+3}}} + N^{31/32+\varepsilon}.
\end{align*}
The sum over $j\leq J$ converges, so we get the bound. We only need to check the case $j>J$, but in this case we have $P^+(d)\leq C(N)$. So, bounding trivially we have
$$ \leq \sum_{a\leq \sqrt{N}} \sum_{\substack{d\geq N^{3/16} \\ P^+(d)\leq C(N)}} \sum_{m\leq N/ad} 1 \leq N \sum_{a\leq \sqrt{N}} \frac{1}{a} \sum_{\substack{d\geq N^{3/16} \\ P^+(d)\leq C(N)}} \frac{1}{d} \ll \frac{N}{(\log N)^2},$$
since the sum over $d$ is the tail of a convergent sum and can be bounded by $\ll (\log N)^{-10}$ by Koukoulopoulos \cite{MR3971232}[Theorem 16.3]. So, we get the bound in this case as well. \\
\textit{Case of $P^-(m)\leq N^{3/16}$ and $d\leq N^{3/16}$:} In this case, we trivially bound $r(n)\leq \tau(n)$, then since $n\leq N$, we bound $\tau(n)\ll N^{\varepsilon}$. Now, because we have $p:=P^-(m)\leq N^{3/16}$ and $d\leq N^{3/16}$, we get $N^{3/8} < p^{e} d \leq N^{(1+e)3/16}$ which implies $e> 1$. So we can write $n=p^e n'$,and get in this case trivially bounding, for every $\varepsilon >0$,
\begin{align*}
\ll N^{\varepsilon} \sum_{e\geq 2} \sum_{p^e\geq N^{3/16}} \sum_{n'\leq N/p^e} 1 &\leq N^{1+\varepsilon} \sum_{e\geq 2} \sum_{ p^e\geq N^{3/16}} \frac{1}{p^e} \\
&\leq N^{15/16+\varepsilon} \sum_{e\geq 2} \sum_p \frac{1}{p^{2e/3}} \\
&\ll N^{15/16+\varepsilon},
\end{align*}
where we use $p^{e/3}\geq N^{1/16}$ to get the last line.
\subsection*{Upper bound for the second moment of \texorpdfstring{$r(n)$}{r(n)}:} We first get
$$ 
\sum_{\substack{n\leq N \\ \omega(n)\leq K}} r^2(n) \leq \sum_{\substack{ab=uv\leq N \\ \{a,b\}\not=\{u,v\} \\ \omega(ab)\leq K \\ P^-((a^2+b^2)(u^2+v^2))>N^{1/9} }} 1 + 2\cdot \sum_{\substack{ab\leq N \\ \omega(ab)\leq K \\ P^-(a^2+b^2)>N^{1/9}}} 1,
$$
by looking at the non-diagonal $\{a,b\}\not=\{u,v\}$ and diagonal $\{a,b\}=\{u,v\}$ solutions. The latter sum is the same one we studied in last section. For the former sum, we use the same idea as the latter sum. So we write $a\leq b$, and then split $b=md$ with $P^-(m)>\max\{P^+(d) , (N^{3/8}/d)^{1/e_{\ell+1}}\}$ and $d\leq N^{3/8} < d p_{\ell+1}^{e_{\ell+1}}$. Then we write $d=d_1d_2$ and $m=m_1m_2$ so that $d_1m_1 |u$, and $d_2m_2|v$. Thus, if we let $u'=u/d_1m_1$ and $v'=v/d_2m_2$, then we have a sum $u'v' = a \leq \sqrt{N}$ and sums over smooth $d_1d_2 \leq N^{3/8}$, and rough $m_1m_2\leq N/u'v'd_1d_2$, with $P^-((u'^2v'^2+d_1^2d_2^2m_1^2m_2^2)(u'^2d_1^2m_1^2 + v'^2d_2^2m_2^2))>N^{1/9}$. Moreover, again if $ b\leq N^{3/8}$ that is $m_1m_2=1$, then we have trivially the bound $\ll N^{3/4+\varepsilon}$. So, we can assume, without loss of generality, $m_1>1$, and hence $m_1> P^-(m_1)$. We again split into cases according to the sizes of $P^-(m_1m_2)$ and $d_1d_2$. \\
\textit{Case of $P^-(m_1m_2)>N^{3/16}$:} We have in this case that $m_1m_2\leq N$ and $P^-(m_1), P^-(m_2)>N^{3/16}>N^{1/9}$, hence $\omega(m_1m_2)\leq 16/3$. So, again the prime factors concentrates around $u'v'd_1d_2$. So we can bound by
\begin{align*}
&\ll \sum_{u'v'\leq \sqrt{N}} \sum_{\substack{d_1d_2\leq N^{3/8} \\ \omega(u'v'd_1d_2)\leq K}} \sum_{\substack{m_2\leq N^{13/16}/d_1d_2u'v' \\ P^-(m_2)>N^{3/16}}}\sum_{\substack{N^{3/16}<m_1\leq N/d_1d_2u'v'm_2 \\ P^-(m_1(u'^2d_1^2m_1^2+v'^2d_2^2m_2^2)(u'^2v'^2+d_1^2d_2^2m_1^2m_2^2))>N^{1/9}}} 1 \\
&\ll \frac{N}{(\log N)^3} \sum_{u'v'\leq \sqrt{N}} \sum_{\substack{d_1d_2\leq N^{3/8} \\ \omega(u'v'd_1d_2)\leq K}} \sum_{\substack{m_2\leq N/d_1d_2u'v' \\ P^-(m_2)>N^{3/16}}}\frac{\mathfrak{S}(u',v',d_1,d_2,m_2)}{u'v'd_1d_2m_2}\\
&\ll \frac{N}{(\log N)^3}  \sum_{r} \frac{\mu^2(r)2^{\omega(r)}}{r} \sum_{u'v'\leq \sqrt{N}} \frac{1}{u'v'} \sum_{\substack{d_1d_2\leq N^{3/8} \\ \omega(u'v'd_1d_2)\leq K}} \frac{1}{d_1d_2} \sum_{\substack{m_2\leq N/u'v'd_1d_2 \\ r|u'v'd_1d_2m_2 \\ P^-(m_2)>N^{3/16}}} \frac{1}{m_2} \\
&\leq \frac{N}{(\log N)^3}  \sum_{r} \frac{\mu^2(r)2^{\omega(r)}}{r} \sum_{\substack{h\leq N^{7/8} \\ \omega(h)\leq K }} \frac{\tau_4(h)}{h} \sum_{\substack{m_2\leq N/h \\ r|hm_2 \\ P^-(m_2)>N^{3/16}}} \frac{1}{m_2}, 
\end{align*}
where we used Lemma \ref{second_beta_sieve_result} to get the second line, \eqref{more_variable_singular_series_bound} for the third line, and $h=u'v'd_1d_2$ for the last line. Now, we split $hm_2=rkm$, then we have $\tau_4(h)\leq \tau_4(k) \tau_4(r)$ by semi-multiplicativity of $\tau_4$. Hence, we have
\begin{align*}
\sum_{\substack{h\leq N^{7/8}\\ \omega(h)\leq K }} \frac{\tau_4(h)}{h} \sum_{\substack{m_2\leq N/h \\ r|hm_2 \\ P^-(m_2)>N^{3/16}}} \frac{1}{m_2}&\leq \frac{\tau_4(r)}{r}\sum_{\substack{k\leq N^{7/8} \\ \omega(k)\leq K}} \frac{\tau_4(k)}{k} \sum_{\substack{m\leq N \\ P^-(m)>N^{3/16}}} \frac{1}{m} \\
&\ll \frac{\tau_4(r)}{r} \frac{1}{K!}\bigg( \sum_{p\leq N^{7/8}} \frac{4}{p} +O(1) \bigg)^K \\
&\ll \frac{\tau_4(r)}{r} \cdot \frac{(\log N)^{3-\delta}}{\sqrt{\log\log N}},
\end{align*}
where we bound the sum over $m$ by a constant, and used Erd\H{o}s' trick, Stirling's approximation with $K=\lfloor \frac{\log\log N}{\log 2} \rfloor$. So, similar to the previous subsection, we get the upper bound in this case. \\
\textit{Case of $P^-(m_1m_2)\leq N^{3/16}$ and $d_1d_2>N^{3/16}$:} Let $z=N^{3/8}$. We again do a dyadic decomposition $z_j \geq P^+(d_1d_2) > z_{j+1}$ with $z_j=z^{2^{-j}}$, and we choose $J$ such that $z_{J+1} \leq C(N) < z_J$. Thus, we have $P^-(m_1m_2)>z_{j+1}$ and $P^-((u'^2v'^2+m_1^2m_2^2d_1^2d_2)(u'^2d_1^2m_1^2+v'^2d_2^2m_2^2))>N^{1/9}$ can be bounded by $P^-((u'^2v'^2+m_1^2m_2^2d_1^2d_2^2)(u'^2d_1^2m_1^2+v'^2d_2^2m_2^2))>z_{j+1}$ since $z_{j+1} \leq z_2 \leq N^{1/9}$. Notice also that since $P^-(m_1m_2)>z^{2^{-j-1}}$, we have $\omega(m_1m_2)\leq 2^{j+4}/3\leq 2^{j+3}$, and since $(u'v'd_1d_2,m_1m_2)=1$, we have $\omega(u'v'd_1d_2)\leq K- 2^{j+3}$. We again recall $2^{j+3} =o(\log\log N)$ for $j\leq J$. Similarly, we get
\begin{align}
&\sum_{j\leq J} \sum_{u'v'\leq \sqrt{N}} \sum_{\substack{N^{3/16}\leq d_1d_2\leq N^{3/8} \\ P^+(d_1d_2)\leq z_{j+1} \\ \omega(ad)\leq K-2^{j+3}}} \sum_{\substack{m_2\leq N/u'v'd_1d_2 \\ P^-(m_2)>z_{j+1}}}\sum_{\substack{m_1\leq N/u'v'd_1d_2m_2 \\ P^-(m_1(u'^2v'^2+m_1^2m_2^2d_1^2d_2^2)(u'^2d_1^2m_1^2+v'^2d_2^2m_2^2))>z_{j+1}}} 1 \nonumber \\
&\ll \frac{N}{(\log N)^3} \sum_{j\leq J} 8^j \sum_{u'v'\leq \sqrt{N}} \sum_{\substack{N^{3/16}\leq d_1d_2\leq N^{3/8} \\ P^+(d_1d_2)\leq z_{j+1} \\ \omega(u'v'd_1d_2)\leq K-2^{j+3}}} \sum_{\substack{m_2\leq N/u'v'd_1d_2 \\ P^-(m_2)>z_{j+1}}} \frac{\mathfrak{S}(u',v',d_1,d_2,m_2)}{u'v'd_1d_2m_2} \nonumber \\
&\ll \frac{N}{(\log N)^3}  \sum_{j \leq J} 8^j \sum_{r} \frac{\mu^2(r)2^{\omega(r)}}{r} \sum_{u'v'\leq \sqrt{N}} \frac{1}{u'v'} \sum_{\substack{N^{3/16}\leq d_1d_2\leq N^{3/8} \\ P^+(d_1d_2)\leq z_{j+1} \\ \omega(u'v'd_1d_2)\leq K-2^{j+3}}} \frac{1}{d_1d_2} \sum_{\substack{m_2\leq N/u'v'd_1d_2 \\ r|u'v'd_1d_2m_2 \\ P^-(m_2)>z_{j+1}}} \frac{1}{m_2} \nonumber \\
&\leq  \frac{N}{(\log N)^3}  \sum_{j\leq J} 8^j \sum_{r} \frac{\mu^2(r)2^{\omega(r)}}{r^2}\sum_{i+\ell \leq K- 2^{j+3}} \sum_{\substack{u'v'\leq \sqrt{N}/r \\ \omega(u'v')=i}} \frac{1}{u'v'} \sum_{\substack{N^{3/16}/r\leq d_1d_2\leq N^{3/8}/r \\ P^+(d_1d_2)\leq z_{j+1} \\ \omega(d_1d_2)=\ell}} \frac{1}{d_1d_2} \sum_{\substack{m_2\leq N/u'v'd_1d_2r \\ P^-(m_2)>z_{j+1}}} \frac{1}{m_2}, \label{equation_for_case_2_more_variable}
\end{align}
where we can split $\omega(u'v'd_1d_2)=\omega(u'v')+\omega(d_1d_2)$ because $\gcd(u'v',d_1d_2)=1$ and for the last line we took out the $r$. We again notice that we can bound the contribution of $r\geq N^{1/16}$ trivially,
\begin{align*}
&\frac{N}{(\log N)^3}   \sum_{j\leq J} 8^j\sum_{r\geq N^{1/16}} \frac{\mu^2(r)2^{\omega(r)}}{r^2} \sum_{\substack{u'v'\leq \sqrt{N}/r }} \frac{1}{u'v'} \sum_{\substack{d_1d_2\leq N^{3/8}/r}} \frac{1}{d_1d_2} \sum_{m_2\leq N} \frac{1}{m_2} \\
&\ll N \sum_{j\leq J} 8^j \sum_{r\geq N^{1/16}} \frac{1}{r^{3/2}} \ll N^{31/32 + \varepsilon},
\end{align*}
for any $\varepsilon>0$ very small since $\sum_{j\leq J} 8^j \ll (\log N)^4$. Now, for the remaining $r\leq N^{1/16}$, we first bound the sum over $m_2$. To do this notice that we can bound it by partial summation
$$\sum_{\substack{m_2\leq N \\ P^-(m_2)>z_{j+1}}} \frac{1}{m_2} \ll \frac{2^j}{\log N}+ \int_{z_{j+1}}^N \frac{dt}{t(\log z_{j+1})} \ll 2^j.$$
Thus, if we let $d=d_1d_2$ and $n=u'v'$, then we have
$$ \eqref{equation_for_case_2_more_variable} \ll   \frac{N}{(\log N)^3}  \sum_{j\leq J} 16^j \sum_{r} \frac{\mu^2(r)2^{\omega(r)}}{r^2}\sum_{i+\ell \leq K- 2^{j+3}} \sum_{\substack{n\leq \sqrt{N}/r \\ \omega(n)=i}} \frac{\tau(n)}{n} \sum_{\substack{N^{3/16}/r\leq d\leq N^{3/8}/r \\ P^+(d)\leq z_{j+1} \\ \omega(d)=\ell}} \frac{\tau(d)}{d} + N^{31/32+\varepsilon} .$$
Now, we use Lemma \ref{fixed_prime_factor_smooth_number_calculation} for the sum, since $d\geq N^{3/16}/r\geq N^{1/8}$, to get
\begin{align*}
&\leq \frac{N}{(\log N)^3} \sum_{j\leq J} 16^j\sum_{r\leq N^{1/16}} \frac{\mu^2(r)2^{\omega(r)}}{r^2} \sum_{i+\ell \leq K-2^{j+3}} \sum_{\substack{n\leq \sqrt{N}/r \\ \omega(n)=i}}\frac{\tau(n)}{n} e^{-O(2^j)}\frac{(2\log\log N + O(1))^{\ell}}{\ell !}  \\
&\ll \frac{N}{(\log N)^3} \sum_{j\leq J} \frac{1}{e^j} \sum_{r\leq N^{1/16}} \frac{\mu^2(r)2^{\omega(r)}}{r^2} \sum_{i+\ell \leq K-2^{j+3}} \frac{(2\log \log N+O(1))^{i+\ell}}{i! \ell !} \\
&= \frac{N}{(\log N)^3} \sum_{j\leq J} \frac{1}{e^j} \sum_{r\leq N^{1/16}} \frac{\mu^2(r)2^{\omega(r)}}{r^2} \sum_{k \leq K-2^{j+3}} \frac{(4 \log \log N+O(1))^{k}}{k!} \\
&\ll \frac{N}{(\log N)^{\delta} \sqrt{\log\log N}} \sum_{j\leq J}\frac{1}{e^j (4\log 2+o(1))^{2^{j+3}}}.
\end{align*}
The sum over $j\leq J$ converges, so we get the bound. We only need to check the case when $j>J$, that is $P^+(d_1d_2)\leq C(N)$. But in this case bounding trivially, we have
\begin{align*}
\leq \sum_{u'v'\leq \sqrt{N}} \sum_{\substack{d_1d_2\geq N^{3/16} \\ P^+(d_1d_2)\leq C(N)}} \sum_{m_1m_2\leq N/ad} 1 &\leq N \log N \sum_{u'v'\leq \sqrt{N}} \frac{1}{u'v'} \sum_{\substack{d_1d_2\geq N^{3/16} \\ P^+(d_1d_2)\leq y}} \frac{1}{d_1d_2} \\
&\leq N \log N \sum_{n\leq \sqrt{N}} \frac{\tau(n)}{n} \sum_{\substack{d\geq N^{3/16} \\ P^+(d)\leq C(N)}} \frac{\tau(d)}{d} \ll \frac{N}{(\log N)^2},
\end{align*}
since the sum over $d$ is the tail of a convergent sum, and since $C(N)$ is as defined in \eqref{cut_off_variable}, it can be bounded by $\ll (\log N)^{-10}$ by Koukoulopoulos \cite{MR3971232}[Theorem 16.3]. So, we get the bound in this case as well. \\
\textit{Case of $P^-(m_1m_2)\leq N^{3/16}$ and $d_1d_2\leq N^{3/16}$:} In this case, we again trivially bound $r^2(n)\leq \tau^2(n)$, then since $n\leq N$, we bound $\tau^2(n)\ll N^{\varepsilon}$. Now, because we have $p:=P^-(m_1m_2)\leq N^{3/16}$ and $d_1d_2\leq N^{3/16}$, we get $N^{3/8} < p^{e} d_1d_2 \leq N^{(1+e)3/16}$ which implies $e> 1$. So we can write $n=p^e n'$,and get in this case the same result as the average of $r(n)$, for every $\varepsilon >0$,
\begin{align*}
\ll N^{\varepsilon} \sum_{e\geq 2} \sum_{p^e\geq N^{3/16}} \sum_{n'\leq N/p^e} 1 &\leq N^{1+\varepsilon} \sum_{e\geq 2} \sum_{ p^e\geq N^{3/16}} \frac{1}{p^e} \\
&\leq N^{15/16+\varepsilon} \sum_{e\geq 2} \sum_p \frac{1}{p^{2e/3}} \\
&\ll N^{15/16+\varepsilon},
\end{align*}
where we use $p^{e/3}\geq N^{1/16}$ to get the last line.
\section{Lower Bound}\label{section_lower_bound}
We will show a lower bound for the average of $r(n)$. Let $n$ be an element in $\CB(N)$, then $n=ab$ for some $a,b\in \N$ such that $\Omega(a^2+b^2)\leq 5$ and $P^-(a^2+b^2)>N^{1/9}$. For the lower bound, we look at the subset of $n=abp$ where $ab\leq N^{1/1000}$ and $p \leq N/ab$. We also force $ab$ to be square-free to have the $\gcd(a,b)=1$ condition to satisfy the admissibility. Thus, we have
\begin{align}
\sum_{\substack{n\leq N \\ \omega(n)\leq K}}r(n) &\geq \sum_{\substack{ab\leq N^{1/1000} \\ \gcd(a,b)=1 \\ \omega(ab)= K}} \mu^2(ab) \sum_{\substack{p \leq N/ab \\ \Omega(a^2+p^2b^2)\leq 5 \\ P^-(a^2+p^2b^2)>N^{1/9}}} 1 \nonumber \\
&\geq \sum_{\substack{ab\leq N^{1/1000} \\ \gcd(a,b)=1 \\ \omega(ab)= K}} \mu^2(ab) \sum_{\substack{p \leq N/ab \\ \Omega(a^2+p^2b^2)\leq 5 \\ P^-(a^2+p^2b^2)>\left(\frac{N}{ab(\log N/ab)}\right)^{1/8}}} 1. \label{main_lower_sum}
\end{align}
Now, for the inner sum in \eqref{main_lower_sum} we use the weighted sieve result Lemma \ref{weighted_sieve_result} and $1\leq ab\leq N^{1/1000}$ to get
$$\sum_{\substack{n\leq N \\ \omega(n)= K}}r(n) \gg \frac{N}{(\log N)^2} \sum_{\substack{ab\leq N^{1/1000} \\ \omega(ab)= K}} \mu^2(ab) \frac{\mathfrak{S}'(a,b)}{ab} .$$
Now, we study the singular series.
\begin{lem}\label{second_singular_series_bound}
For any $a,b\in \N$ with $2| ab$, we have
$$ \mathfrak{S}'(a,b)\gg 1 .  $$
\end{lem}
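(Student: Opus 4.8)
The plan is to \emph{complete} the product defining $\mathfrak{S}'(a,b)$ over the primes not dividing $ab$ into a product over \emph{all} odd primes, then to check that (i) this completed product converges to a positive absolute constant, and (ii) the local factors removed at the primes dividing $ab$ only increase the value.

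First I would set $g_p := \bigl(1 - \tfrac{1+\chi_4(p)}{p-1}\bigr)\bigl(1-\tfrac1p\bigr)^{-1}$ for odd $p$, so that $\mathfrak{S}'(a,b) = \prod_{p\nmid ab} g_p \cdot \prod_{p|ab}(1-1/p)^{-1}$; the value $g_2=0$ is harmless since $2\mid ab$ by hypothesis. Splitting by the residue of $p\bmod 4$ gives $g_p = (1-1/p)^{-1}$ when $p\equiv 3\pmod 4$ (as then $\chi_4(p)=-1$) and $g_p = \frac{p(p-3)}{(p-1)^2}>0$ when $p\equiv 1\pmod 4$ (such $p$ satisfy $p\geq 5$), and in both cases a Taylor expansion yields $g_p = 1-\chi_4(p)/p + O(p^{-2})$. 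Comparing termwise with the Euler product for $L(1,\chi_4)$ (and using $\chi_4(2)=0$) shows that $\prod_{p\text{ odd}} g_p = L(1,\chi_4)^{-1}\prod_{p\text{ odd}}\bigl(1+O(p^{-2})\bigr)$ converges to a positive absolute constant $c_0$.

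Next, using $2\mid ab$ to peel off the factor $(1-1/2)^{-1}=2$ and completing the first product over odd primes, I would write
\begin{equation*}
\mathfrak{S}'(a,b) = 2\,c_0 \prod_{\substack{p\mid ab\\ p\text{ odd}}}\frac{(1-1/p)^{-1}}{g_p} = 2\,c_0 \prod_{\substack{p\mid ab\\ p\text{ odd}}}\Bigl(1-\tfrac{1+\chi_4(p)}{p-1}\Bigr)^{-1} = 2\,c_0 \prod_{\substack{p\mid ab\\ p\text{ odd}}}\frac{p-1}{\,p-2-\chi_4(p)\,},
\end{equation*}
and observe that each factor on the right equals $1$ when $p\equiv 3\pmod 4$ and $\frac{p-1}{p-3}\geq 1$ when $p\equiv 1\pmod 4$ (again $p\geq 5$, so the denominator is positive). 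Hence $\mathfrak{S}'(a,b)\geq 2c_0 \gg 1$.

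The only real obstacle is step (i): one must show $\prod_{p\text{ odd}}g_p$ converges to a \emph{nonzero} limit, and one \emph{cannot} instead bound $\mathfrak{S}'$ from below by simply discarding the (harmless, $\geq 1$) factors with $p\equiv 3\pmod 4$, because $\prod_{p\equiv 1(4)} g_p = \prod_{p\equiv 1(4)}\bigl(1-\tfrac{p+1}{(p-1)^2}\bigr)$ diverges to $0$, since $\tfrac{p+1}{(p-1)^2}\sim \tfrac1p$ and $\sum_{p\equiv 1(4)} 1/p = \infty$. The two residue classes must be kept together so that their divergent tails cancel — which is precisely what the comparison with $L(1,\chi_4)$ (equivalently, the convergence of $\sum_p \chi_4(p)/p$) achieves.
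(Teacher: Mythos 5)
Your proof is correct and follows essentially the same route as the paper's: complete the product over $p\nmid ab$ to a product over all odd primes (an absolute positive constant), and observe that the leftover local factors $\bigl(1-\tfrac{1+\chi_4(p)}{p-1}\bigr)^{-1}$ at odd $p\mid ab$ are all $\geq 1$. You simply make explicit two points the paper compresses into a single ``$\asymp$'': the convergence of the completed product to a nonzero constant via comparison with $L(1,\chi_4)$, and the case analysis on $p\bmod 4$.
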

\begin{proof}
We first recall the singular series,
$$ \mathfrak{S}'(a,b) = \prod_{p\nmid ab} \bigg( 1 - \frac{1+\chi_4(p)}{p-1}\bigg) \bigg( 1  -\frac{1}{p}\bigg)^{-1} \prod_{p|ab} \bigg( 1 -  \frac{1}{p}\bigg)^{-1}. $$
We take out $p=2$ from the product over $p|ab$, and complete the sum over $p\nmid ab$ to get
$$ \mathfrak{S}'(a,b)\asymp \prod_{\substack{p>2 \\ p|ab}} \bigg(1 - \frac{1+\chi_4(p)}{p-1} \bigg)^{-1} \geq 1. $$
\end{proof}
Then by Lemma $\ref{second_singular_series_bound}$, we have
\begin{align}
\frac{N}{(\log N)^2} \sum_{\substack{ab\leq N^{1/1000}  \\ \omega(ab)= K}} \mu^2(ab) \frac{\mathfrak{S}'(a,b)}{ab} &\gg \frac{N}{(\log N)^2} \sum_{\substack{ab\leq N^{1/1000} \\ 2|ab  \\ \omega(ab)= K}} \frac{\mu^2(ab)}{ab} \nonumber \\
&\gg \frac{N}{(\log N)^2} \sum_{\substack{a_0b_0 \leq N^{1/1000}/2 \\ 2\nmid a_0b_0 \\ \omega(a_0b_0)= K-1}} \frac{\mu^2(a_0b_0)}{a_0b_0} \label{main_lower_bound}
\end{align}
Now, let $n=a_0b_0$, then we have
$$ \sum_{\substack{n\leq N^{1/1000}/2 \\ 2\nmid n \\ \omega(n)= K-1}} \frac{\mu^2(n) \tau(n)}{n} = \frac{(2\log\log N + O(1))^{K-1}}{(K-1)!}, $$
by Lemma \ref{almost_prime_on_divisor_function}. Thus, we have for \eqref{main_lower_bound},
$$ \gg \frac{N}{(\log N)^{\delta} \sqrt{\log\log N}},$$
by Stirling's approximation with $K=\lfloor \frac{\log\log N}{\log 2} \rfloor $. Therefore, we get
$$ \sum_{\substack{n\leq N \\ \omega(n)\leq K}} r(n) \gg \frac{N}{(\log N)^{\delta} \sqrt{\log\log N}}. $$

\nocite{*}
\bibliographystyle{plain}
\bibliography{bibliography_for_almost_prime_hypotenuse}

\end{document}